\newtheorem{theorem}{Theorem}[section]
\newtheorem{corollary}[theorem]{Corollary}
\newtheorem{lemma}[theorem]{Lemma}
\newtheorem{proposition}[theorem]{Proposition}
\theoremstyle{definition}
\newtheorem{definition}[theorem]{Definition}
\newtheorem{problem}[theorem]{Problem}
\theoremstyle{remark}
\newtheorem{remark}[theorem]{Remark}
\numberwithin{equation}{section}
\newcommand{\eps}{\varepsilon}
\newcommand{\calL}{\mathcal{L}}
\newcommand{\calA}{\mathcal{A}}
\newcommand{\calD}{\mathcal{D}}
\newcommand{\calT}{\mathcal{T}}
\newcommand{\calS}{\mathcal{S}}
\newcommand{\calW}{\mathcal{W}}
\newcommand{\calC}{\mathcal{C}}
\newcommand{\calB}{\mathcal{B}}
\newcommand{\calV}{\mathcal{V}}
\newcommand{\calH}{\mathcal{H}}
\newcommand{\calM}{\mathcal{M}}
\newcommand{\R}{\mathds{R}}
\newcommand{\C}{\mathds{C}}
\newcommand{\Z}{{\mathbb Z}}
\newcommand{\prt}{\partial}
\newcommand{\ol}{\overline}
\newcommand{\wh}{\widehat}
\newcommand{\wt}{\widetilde}
\newcommand{\om}{\ominus}
\newcommand{\bv}{\mathbf{v}}
\newcommand{\bw}{\mathbf{w}}
\newcommand{\bs}{\mathbf{s}}
\DeclareMathOperator{\Leb}{Leb}
\DeclareMathOperator{\mtwo}{{\bf \Lambda}}
\def\bv{{\bf v}}
\def\ball{\mathbf{B}}
\title[Invisibility via reflecting coating]{Invisibility via reflecting coating}
\author{Krzysztof Burdzy and Tadeusz Kulczycki}
\address{KB: Department of Mathematics, Box 354350, University of Washington, Seattle, WA 98195}
\email{burdzy@math.washington.edu}
\thanks{K.~Burdzy was supported in part by NSF Grant DMS-0906743 and
by grant N N201 397137, MNiSW, Poland. }
\address{TK: Institute of Mathematics and Computer Science, Wroc{\l}aw University of Technology, Wybrzeze Wyspianskiego 27, 50-370 Wroc{\l}aw, Poland}
\email{Tadeusz.Kulczycki@pwr.wroc.pl}
\begin{document}

\begin{abstract}
We construct a subset $A$ of the unit disc with the following properties. (i) The set $A$ is the finite union of disjoint  line segments. (ii) The shadow of $A$ is arbitrarily close to the shadow of the unit disc in ``most'' directions. (iii) If the line segments are considered to be mirrors reflecting light according to the classical law of specular reflection then most light rays hitting the set emerge on the other side of the disc moving along a parallel line and  shifted by an arbitrarily small amount.

We also construct a set which reflects almost all light rays coming from one direction to another direction but 
its shadow is arbitrarily small in other directions, except for an arbitrarily small family of directions. 
\end{abstract}

\maketitle

\section{Introduction and the main result}\label{intro}

We will construct two pre-fractal sets with special reflective and projection properties. First, we will construct a subset $A$ of the unit disc with the following properties. (i) The set $A$ is the finite union of disjoint   line segments. (ii) The shadow of $A$ is arbitrarily close to the shadow of the unit disc in most directions (that is, the complement of the set of such directions has arbitrarily small measure). (iii) If the line segments are considered to be mirrors reflecting light according to the classical law of specular reflection then most light rays hitting the set emerge on the other side of the disc traveling along a parallel line  and shifted by an arbitrarily small amount.

Next, we will construct a set which reflects almost all light rays coming from one direction to another direction but 
its shadow is arbitrarily small in other directions, except for an arbitrarily small family of directions. 

The article has multiple sources of inspiration.
One of them is recent progress on invisibility (see \cite{uhl1,uhl2}) although there is no direct relationship between our article and those papers at the technical level.
Another source of inspiration is the theory of radiative transfer (see \cite{peraiah}) which is used by astrophysicists to study the scattering of light.
The relationship between reflections and ``visibility'' is implicit in Problem 2.6 of \cite{oakbss}.
And last but not least, Falconer's ``digital sundial'' theorem (\cite[Thm. 6.9]{falcbook}; see also Proposition \ref{m15.1} below) provided not only inspiration but also a significant mathematical step in our argument.

The set constructed in Theorem \ref{a3.1} below might be a building block of a reflective ``surface'' 
that is the subject of Problem 2.6 of \cite{oakbss}. At this point, we do not know how to turn this observation into a rigorous solution of that problem. 
The set in Theorem \ref{a3.1} resembles the set in the Besicovitch theorem 
(see \cite[Thm.~6.15, p.~90]{falcgeo}) and the ``four-corner Cantor set'' 
(see \cite{PSS,BV,NPV}) but we did not find a way to turn these
classical constructions into a direct proof of our result.

We now introduce notation and definitions needed to state our
results in a rigorous way. It will be convenient to identify
$\R^2$ and $\C$ and switch between the vector and complex
analytic notation. Let $\Pi_\theta A$ denote the orthogonal
projection of a set $A\subset \R^2 $ on the line $K_\theta :=
\{z=re^{i\theta}: r\in \R\}$. The 1-dimensional Lebesgue
measure will be denoted $\Leb$. By abuse of notation, we will
use the same symbol $\Leb$ to denote 1-dimensional measures on
lines in $\R^2$ and restriction of the Lebesgue measure to
$[0,2\pi)$. Let $D = \{x\in \R^2: |x|\leq 1\}$ denote the unit
disc.

We will be concerned with light rays moving at a constant speed and reflecting from ``mirrors,'' that is, line segments. We will describe trajectories of light rays using ``directed lines''. A \emph{directed line} $L_{\bv,\bw}$ is an affine map $\Gamma_{\bv,\bw}: \R \to \R^2$,
$\Gamma_{\bv,\bw}(t) = t\bv + \bw$, where $\bv,\bw\in \R^2$, $\bv \perp \bw$ and $|\bv| =1$.
We will write $L_{\bv,\bw}=\Gamma_{\bv,\bw}(\R)$.
Let $\calW$ be the collection of all pairs $(\bv,\bw)$ such that
$\bv,\bw\in \R^2$, $\bv \perp \bw$ and $|\bv| =1$. The set $\calW$ may be considered to be a subset of $\R^4$. As such, it inherits the usual metric and topology from $\R^4$.
The following formula uniquely defines a measure $\mtwo$ on $\calW$,
\begin{align}\label{m18.5}
&\mtwo(\{(\bv,\bw): \bw = r e^{i\theta}, \bv = e^{i(\theta+\pi/2)} , \theta\in [\theta_1,\theta_2], r\in[r_1,r_2]\}) \\
&=\mtwo(\{(\bv,\bw): \bw = r e^{i\theta}, \bv = e^{i(\theta-\pi/2)} , \theta\in [\theta_1,\theta_2], r\in[r_1,r_2]\}) \nonumber \\
&= (\theta_2-\theta_1) (r_2-r_1).\nonumber
\end{align}
Let $\calV$ be the set of all pairs $(\bv,\bw)\in\calW$ such
that $\bw \in D$. Then $\mtwo(\calV) = 4\pi$. Heuristically
speaking, according to the probability measure
$\mtwo(\,\cdot\,)/(4\pi)$, the direction of $\Gamma_{\bv,\bw}$ is
chosen uniformly, and so is the distance from $0$ (within $[0,1]$).

A \emph{light ray} is a continuous and piecewise affine map $R: \R \to \R^2$. We require that for each light ray, $\R$ can be partitioned into a finite number of intervals
and on each interval, $R(t) = (t-t_1) \bv + \bw$, for some $(\bv, \bw)$ and $t_1$ depending on the interval. Either one or two of the intervals have infinite length. We will say that $R$ comes from direction
$\Gamma_{\bv_1, \bw_1}$ if $R(t) = (t-t_1) \bv_1 + \bw_1$ on the interval that extends to $-\infty$. We will say that $R$ escapes in direction
$\Gamma_{\bv_2, \bw_2}$ if $R(t) = (t-t_2) \bv_2 + \bw_2$ on the interval that extends to $+\infty$.
We will assume that light rays reflect from mirrors (line segments) according to the classical law of specular reflection in which the angle of reflection is equal to the angle of incidence.
Suppose that a set $F\subset \R^2$ consists of a finite number of line segments. Then a light ray which arrives ``from infinity'' and hits $F$ will not be trapped (this can be proved as Proposition 2.1 in \cite{oakbss}) so it has a well defined direction from which it arrives and a direction
of escape. 
If a light ray which comes from direction
$\Gamma_{\bv_1, \bw_1}$ escapes in the direction $\Gamma_{\bv_2, \bw_2}$
after reflecting in mirrors comprising $F$ then we 
will write $\calT_F(\bv_1, \bw_1) = (\bv_2, \bw_2)$.

The following is the first of our two main results.

\begin{theorem}\label{m16.10}(Invisibility via reflecting coating)
For every $\eps>0$ there exists a set $F\subset D$ which
consists of a finite number of line segments and such that
there exists a set $\calA \subset \calV$ with $\mtwo(\calV
\setminus \calA) \leq \eps$, satisfying the following
conditions.
\begin{enumerate}[(i)]
\item For all $(\bv,\bw) \in \calA$, the line
    $L_{\bv,\bw}(\R)$ intersects $F$.
\item For all $(\bv,\bw) \in \calA$, we have
    $\calT_F(\bv,\bw) = (\bv, \bw_1)$ for some $\bw_1$
    satisfying
\begin{align}\label{a4.1}
|\bw - \bw_1| \leq\eps.
\end{align}
\end{enumerate}
\end{theorem}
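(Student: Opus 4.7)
The plan is to build $F$ from many copies of a basic two-mirror \emph{periscope} device, and to assemble these into direction-specific grates indexed by a fine discretization of $[0,2\pi)$. The key observation is that if two line segments share a common unit normal $\mathbf n$, then the reflection $T(\mathbf u)=\mathbf u-2(\mathbf u\cdot\mathbf n)\mathbf n$ satisfies $T^2=I$ on direction vectors, so two consecutive reflections off parallel mirrors preserve the direction of a light ray while translating its trajectory by a bounded amount. I will design each periscope at scale $\le\eps/10$, with the common mirror normal tilted away from a chosen central incoming direction $\bv_0$ by some angle $\alpha\in(0,\pi/2)$, so that for every $\bv$ in a small arc around $\bv_0$ and every $\bw$ in a short aperture interval the ray $L_{\bv,\bw}$ strikes mirror one, reflects, strikes mirror two, reflects, and escapes as $L_{\bv,\bw_1}$ with $|\bw-\bw_1|\le\eps$, giving \eqref{a4.1} automatically.

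Next, fix $n$ of order $1/\eps$, let $\theta_1,\dots,\theta_n\in[0,2\pi)$ be evenly spaced, and set $\bv_k=e^{i(\theta_k+\pi/2)}$. For each $k$ I will assemble a \emph{grate} $G_k\subset D$ consisting of many small periscopes, all oriented to handle the arc $|\theta-\theta_k|\le\pi/n$, packed side by side along $K_{\theta_k}$ so that the projection $\Pi_{\theta_k}G_k$ differs from $[-1,1]$ by a Lebesgue set of measure at most $\eps/n$. Consequently, for $\bv$ in the arc around $\bv_k$, all but a set of $\bw$ of $\mtwo$-measure $\le C\eps/n$ will enter a periscope of $G_k$ and be correctly redirected according to the design of the first paragraph.

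Set $F=\bigcup_{k=1}^n G_k$. The hard part will be controlling \emph{cross-interference}: a ray aimed at $G_k$ may instead first strike some $G_j$ with $j\ne k$ and be reflected in an uncontrolled direction. To handle this I will invoke Falconer's digital sundial theorem, in the form of Proposition \ref{m15.1}, to arrange that each $G_j$ simultaneously has $\Pi_{\theta_j}G_j$ close to $[-1,1]$ and $|\Pi_\theta G_j|\le\eps/n^2$ for every $\theta$ outside the arc of radius $\pi/n$ around $\theta_j$; this is exactly the Besicovitch/four-corner-Cantor type selective-shadow property that the sundial delivers. A standard Crofton-type identity expresses the $\mtwo$-measure of directed lines of direction $e^{i(\theta+\pi/2)}$ that meet $G_j$ as a constant multiple of $|\Pi_\theta G_j|$, so summing over $j\ne k$ and over the $n$ arcs bounds the total cross-interference measure by $O(\eps)$.

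The central obstacle will be forcing the three requirements on each $G_j$ to coexist: (a) nearly full shadow in direction $\theta_j$; (b) negligible shadow in every other direction; and (c) the internal periscope structure that preserves direction of reflected rays. Parts (a) and (b) are precisely what Proposition \ref{m15.1} provides, but requirement (c) constrains the pre-fractal segments produced by the sundial construction to come in pairs of parallel mirrors with compatible normals, with every intermediate reflected ray confined to a disjoint ``private'' strip so that neighbouring periscopes (within $G_k$ and across different $G_j$) cannot corrupt each other's two-bounce reflection pattern. Once this geometric bookkeeping is carried out, $\calA$ is defined as the set of $(\bv,\bw)$ whose direction lies in an arc around some $\bv_k$ and whose line meets $G_k$ cleanly without first meeting any $G_j$ with $j\ne k$ or either mirror of a periscope of $G_k$ at a bad place. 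Combining the estimates above yields $\mtwo(\calV\setminus\calA)\le\eps$ after renaming constants.
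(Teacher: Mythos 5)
Your proposal shares the paper's broad architecture: use Falconer's digital sundial to get a set with nearly full shadow in a narrow arc of directions and negligible shadow elsewhere, make all mirrors within each directional unit parallel so that $T^2=I$ on direction vectors, tile $[0,2\pi)$ by $O(1/\eps)$ arcs with one unit per arc, and control cross-interference by summing the tiny off-axis projections. That much matches the paper's $M_k$/block structure and Lemma~\ref{m18.14}. The genuine gap is in how you propose to guarantee direction preservation and small displacement: you want each Falconer piece to be wired into a two-mirror \emph{periscope} with exactly two bounces and a disjoint ``private strip'' shielding it from all other segments. That requirement is in direct conflict with what the sundial forces on you. A set whose $\Pi_{\theta_k}$-shadow is nearly $[-1,1]$ while its $\Pi_\theta$-shadow is $O(\eps/n^2)$ for $\theta$ away from $\theta_k$ must be a dense fractal-like union of many tiny parallel segments (this is the Besicovitch phenomenon); after one reflection a ray travels nearly parallel to the strip and will generically encounter an unbounded, geometry-dependent number of further segments, so there is no way to allocate each periscope a private strip that other segments avoid. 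You acknowledge this as the ``central obstacle'' and call it ``geometric bookkeeping,'' but it is not bookkeeping — it is the crux, and the construction you sketch cannot carry it out.

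The paper sidesteps the bounce count entirely. Inside each thin rectangle $M'_0$ the mirrors are all slope $-1$, so the travelling direction is always one of two values $\{e^{i(\theta+\pi/2)},\,e^{i(\pi-\theta)}\}$, and the argument never tracks how many reflections occur. Instead, it invokes the measure-preserving (Liouville) property of specular reflection (Lemma~\ref{m20.4}): the escape bundle has the same linear measure as the incoming bundle. Then the set of rays escaping in the \emph{wrong} direction $e^{i(\pi-\theta)}$ is bounded by $\Leb(\Pi_{\pi/2-\theta}M_0)$, which is tiny because $M_0$ is a thin radial strip, and the set of rays suffering large lateral displacement is bounded by partitioning $M'_0$ into boxes $D_k$ of width $\eps/2$ and measuring the side-exit flux (Lemma~\ref{m20.5}). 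To repair your argument you would need to replace the ``private periscope'' picture by exactly this kind of flux/measure-preservation estimate; without it there is no control on rays that bounce once, thrice, or many times before escaping, and your claim that \eqref{a4.1} holds ``automatically'' does not follow.
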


Note that it is the same direction $\bv$ on both sides of the formula $\calT_F(\bv,\bw) = (\bv, \bw_1)$ in Theorem \ref{m16.10} (ii).

\begin{problem}
Does Theorem \ref{m16.10} remain true if \eqref{a4.1} is replaced with $|\bw - \bw_1| =0$? 
\end{problem}

Our second result is the following.

\begin{theorem}\label{a3.1} (Invisible mirror)
For every $\theta \in (0,\pi)$ and $\eps >0$
there exists a set $G \subset \R^2$ which
consists of a finite number of line segments and has the following properties. Let $\calB =
\{(\bv,\bw): \bw = r e^{i(\theta+\pi/2)}, \bv = e^{i\theta} , r\in[0,1]\}$.
\begin{enumerate}[(i)]
\item For 
$\Leb$-almost all $r\in[0,1]$, the light ray arriving from the direction $\Gamma_{(1,0),(0,r)}$ reflects only once in a mirror in $G$ and $\calT_G((1,0), (0,r)) \in \calB$. 
\item For all $\alpha$ satisfying $\alpha -\pi/2 \in [\eps, \theta-\eps] \cup [\theta+\eps, \pi -\eps]$ we have
$\Leb( \Pi_\alpha G) < \eps$.
\end{enumerate}
\end{theorem}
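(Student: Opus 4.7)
The proof rests on two direct computations with the specular reflection law. First, applying $\bv\mapsto\bv-2(\bv\cdot\n)\n$ with $\n=e^{i(\theta/2+\pi/2)}$, one sees that any line segment inclined at angle $\theta/2$ to the horizontal reflects an incoming horizontal ray of direction $(1,0)$ into a ray of direction $e^{i\theta}$. Second, if such a mirror intercepts the horizontal ray of height $y$ at $(x,y)$, then the outgoing directed line has base parameter $r_1=y\cos\theta-x\sin\theta$, so it lies in $\calB$ exactly when $r_1\in[0,1]$. A horizontal translation of a mirror by $s$ therefore leaves the intercepted heights unchanged and shifts $r_1$ by $-s\sin\theta$.

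My plan is to build $G$ as a finite union of short line segments, all inclined at the common angle $\theta/2$, obtained by subdividing a prototype and translating the resulting pieces horizontally. The prototype is the segment from $(-1/\sin\theta,0)$ to $(\cot\theta,1)$; a direct calculation using $\cot(\theta/2)\sin\theta=1+\cos\theta$ gives $r_1=1-y$, so it intercepts every horizontal ray at heights $y\in[0,1]$ and reflects them into $\calB$. I would then subdivide into $M^n$ congruent pieces and translate the $k$th piece horizontally by $s_k$; this preserves the full $y$-projection $[0,1]$, and $r_1\in[0,1]$ continues to hold for every intercepted height provided $s_k$ stays in an admissible interval of width $(1-1/M^n)/\sin\theta$. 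Within this admissible freedom, I would choose the offsets hierarchically, implementing a Falconer-type digital sundial (Proposition \ref{m15.1}): at every level of the iteration the pieces' projections onto $K_\alpha$ are arranged to cluster as strongly as the admissible window allows, simultaneously for every bad direction $\alpha$. The neighborhoods of $\alpha=\pi/2$ and $\alpha=\pi/2+\theta$ must be excluded from (ii) because in those two directions horizontal translations leave $\Pi_\alpha G$ essentially fixed, necessarily of length one (the $y$-projection is $[0,1]$ and the $r_1$-projection covers $[0,1]$).

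The main technical obstacle I anticipate is twofold. First, implementing the digital sundial through horizontal translations alone forces the Falconer-prescribed slope $\eta=\cot(\theta/2)+\tan\alpha$ of $s_k$ versus $y_k$ to fit inside the admissible window, which works neatly only for a restricted subrange of $\alpha$; outside this subrange one obtains only partial clustering at a given scale, and the residual spread of projections must be absorbed at the next, finer scale of the hierarchy, so the shadow estimate is built up level by level rather than at once. Second, one must ensure only one reflection per ray. Since all reflected rays move strictly upward (because $\sin\theta>0$ on $(0,\pi)$), collisions can arise only when a lower piece $k$ sends a ray into a higher piece $k'$, and a brief computation shows this happens iff the offset difference $s_{k'}-s_k$ lies in a slab of width $2/(M^n\sin\theta)$ centered at $-(k'-k)/(M^n\sin\theta)$, which is thin compared with the admissible window of width of order $1/\sin\theta$. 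Excluding these codimension-one events at every scale should be achievable by small generic perturbations of the sundial's choices, since such perturbations do not affect the shadow estimates; nesting the Falconer construction together with these separation conditions and verifying that the limit satisfies both (i) and (ii) is the heart of the proof.
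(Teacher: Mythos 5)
Your skeleton — build $G$ by subdividing a prototype mirror inclined at $\theta/2$ (which is exactly the diagonal $L$ of the paper's rhombus $Z$, since $\cot\theta+1/\sin\theta=\cot(\theta/2)$) and translate the subpieces to suppress the shadow — is the right one, and your preliminary computations ($\bv'=e^{i\theta}$, $r_1=1-y$ on the prototype, $r_1\mapsto r_1-s\sin\theta$ under horizontal shift) are all correct. But two essential mechanisms in the paper's proof are missing, and I don't think the route you describe supplies them. First, the collision‑avoidance step is not achievable by ``small generic perturbations.'' The bad slabs are \emph{not} codimension‑one: they have width $2/(M^n\sin\theta)>0$, and for a piece with large index $k'$ the slabs attached to the lower pieces $k<k'$ are centered at spacings of only $1/(M^n\sin\theta)$, so their union has measure on the order of $(k'-1)/(M^n\sin\theta)\approx 1/\sin\theta$ — comparable to, or larger than, your admissible window of width $(1-1/M^n)/\sin\theta$. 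So for an arbitrary choice of the lower offsets there may be no admissible $s_{k'}$ at all. The paper avoids this entirely by forcing the offsets to encode a \emph{permutation} $\bs\in\calS_n$: piece $\calM_{n,\bs,k}(Z)$ occupies the $y$-interval $[(j_k-1)/n,j_k/n]$ and the $r$-interval $[(k-1)/n,k/n]$, so both systems of intervals tile $[0,1]$ disjointly by construction, every incoming horizontal ray meets exactly one piece, and every reflected ray (constant $r$-coordinate) misses all others. If you want to work with horizontal shifts only, you must impose the same discipline — the shifted $r$-intervals must form a permutation of the $y$-intervals — at which point you have reinvented $\calM_{n,\bs}$.

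Second, nothing in the sketch explains how to make the shadow small \emph{for all bad $\alpha$ at once}. The hierarchical ``residual absorbed at the next scale'' gesture points the right way, but the paper's engine is the composition law: $\calM_{n_1,\bs_1}\circ\cdots\circ\calM_{n_k,\bs_k}(Z)=Z_{n_*,\bs_*}$ is again of the same form (Remark \ref{a3.6}(ii)), and composing with any further $\calM_{n',\bs'}$ preserves a previously achieved shadow bound in a given direction, because $\Leb\bigl(\Pi_\alpha\calM_{n',\bs',k}(Z_{n,\bs})\bigr)=(1/n')\,\Leb\bigl(\Pi_\alpha Z_{n,\bs}\bigr)$ (Remark \ref{a3.6}(iii)). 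Combined with continuity of $\alpha\mapsto\Leb(\Pi_\alpha Z_{n,\bs})$ and compactness of the bad range of $\alpha$, this reduces the problem to finitely many directions, each handled by one factor in the composition. Without an analogue of that composition property the hierarchy has no rigorous foothold. Finally, invoking Falconer's digital‑sundial theorem here is a red herring: Proposition \ref{m15.1} is used in the paper only for Theorem \ref{m16.10}, while the shadow bound in Theorem \ref{a3.1} is obtained by an explicit combinatorial argument with chains in the lattice $\calL_{1,\theta}$ — an argument you would still need to supply (or replace) to make the clustering at a single ``rational'' direction $\alpha$ precise.
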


The rest of the paper is organized as follows. We will construct the set $F$ described in Theorem \ref{m16.10} as the union of a large number of very small fractal-like sets. We will call these sets blocks. The construction of an individual block will be based on a Falkoner's argument and it will be presented in Section \ref{blocks}. We will assemble blocks into a star-shaped structure (set $F$) in Section \ref{hedge}. We will analyze the shadow of $F$ in the same section. Section \ref{reflect} will be devoted to the analysis of light reflections in the set $F$.
Section \ref{invmir} will be devoted to the proof of Theorem \ref{a3.1}.

\bigskip

We are grateful to Donald Marshall and Boris Solomyak for very helpful advice.

\section{Building blocks}\label{blocks}

Let $I_\theta = \{z=re^{i\theta}: r\in [0,1]\}$.
The symmetric difference of sets $A$ and $B$ will be denoted $A \om B$.

The following result is a special case of Falconer's ``digital sundial'' theorem. See \cite[Thm. 6.9]{falcbook}
for an accessible statement of the result and a sketch of the proof. A rigorous proof was given in \cite{falcart} in the multidimensional setting.

\begin{proposition}\label{m15.1}
For every $\theta_1 \in (0,\pi)$, there exists a measurable set $A\subset \R^2$ with the following properties.
\begin{enumerate}[(i)]
\item For $\Leb$-almost all $\theta \in [0, \theta_1]$,
\begin{align*}
\Leb (\Pi_\theta A \om I_\theta ) = 0.
\end{align*}
\item For $\Leb$-almost all $\theta \in ( \theta_1, \pi)$,
\begin{align*}
\Leb (\Pi_\theta A  ) = 0.
\end{align*}
\end{enumerate}
\end{proposition}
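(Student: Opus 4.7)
The plan is to obtain Proposition \ref{m15.1} as a direct specialization of Falconer's general digital-sundial theorem \cite[Thm.~6.9]{falcbook}, \cite{falcart}. Falconer's theorem takes as input a Borel-measurable profile $\theta \mapsto E_\theta \subset K_\theta$ and produces a measurable set $A \subset \R^2$ whose projection $\Pi_\theta A$ equals $E_\theta$ up to a Lebesgue-null error, for $\Leb$-a.e.\ $\theta \in [0,\pi)$. I would feed in the profile $E_\theta := I_\theta$ for $\theta \in [0,\theta_1]$ and $E_\theta := \emptyset$ for $\theta \in (\theta_1,\pi)$. This profile is continuous in the Hausdorff metric on each of the two subintervals, so Borel-measurability of the assignment $\theta \mapsto E_\theta$ is immediate, and the resulting set $A$ satisfies both (i) and (ii) of the proposition.

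For completeness I would include a short sketch of Falconer's construction. One builds $A$ as the intersection $\bigcap_n A_n$ of a decreasing sequence of pre-fractal sets, where each $A_n$ is a finite union of dyadic squares of side $2^{-n}$. At stage $n{+}1$ one subdivides each square of $A_n$ into many subsquares and retains only a carefully chosen subcollection, so that on a $2^{-n}$-dense discrete grid of directions the projection of $A_{n+1}$ approximates $I_\theta$ in Hausdorff metric at the ``shadow'' directions while having $\Leb$-measure at most $2^{-n}$ at the ``null'' directions. The arrangement is self-similar and hierarchical, which lets one transfer projection estimates from a countable dense set of directions to $\Leb$-almost every direction in the limit.

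The main obstacle, and the technical heart of \cite{falcart}, is the measure-theoretic passage to the limit: one must ensure that $A = \bigcap_n A_n$ inherits the projection properties of the $A_n$'s simultaneously for $\Leb$-a.e.\ $\theta \in [0,\pi)$, and that the qualitative transition from full shadow on $[0,\theta_1]$ to null shadow on $(\theta_1,\pi)$ does not produce pathological behaviour near the boundary direction $\theta_1$. This is handled in \cite{falcart} via a Fubini-type argument combined with a measurable selection. Once that machinery is invoked, the present proposition follows immediately from the specialization of the profile described above.
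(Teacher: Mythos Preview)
Your proposal is correct and matches the paper's treatment exactly: the paper does not give its own proof of this proposition but simply states it as a special case of Falconer's digital sundial theorem, citing \cite[Thm.~6.9]{falcbook} and \cite{falcart}. Your additional sketch of Falconer's construction goes beyond what the paper provides, but the underlying approach---specialize the general theorem to the profile $E_\theta = I_\theta$ on $[0,\theta_1]$ and $E_\theta = \emptyset$ on $(\theta_1,\pi)$---is precisely what the paper intends.
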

In words, the projection of $A$ in almost every direction
$\theta \in [0, \theta_1]$ is ``indistinguishable'' from a line segment of unit length, while the projection in
almost every direction $\theta \in ( \theta_1, \pi)$ is ``almost'' invisible (has measure zero).

The next proposition is the main step in the construction of our basic building block.
Although this result is inspired by and very close to, heuristically speaking, Proposition \ref{m15.1}, it does not follow directly from Proposition \ref{m15.1}. Our proof will be based on a claim embedded in a proof of a theorem in \cite{falcart}.

We will call a set a \emph{diamond} if it is a closed square with sides inclined at the angle $\pi/4$ to the axes. We will say that $A\subset \R^2$ is a \emph{diamond set} if it is a finite union of diamonds.
Let $Q_\eps$ denote the square $(-\eps, 1+\eps)^2$.

\begin{proposition}\label{m15.2}
For every $\eps>0$ and $\theta_1\in(0,\pi/2)$
there exists a diamond set $A\subset Q_\eps$ which satisfies the following.
\begin{enumerate}[(i)]
\item There exists $\Theta_1 \subset [0, \theta_1]$ such
    that $\Leb([0,\theta_1] \setminus \Theta_1) \leq \eps$
    and for all $\theta \in \Theta_1$,
\begin{align*}
\Leb (\Pi_\theta A \om I_\theta ) \leq\eps.
\end{align*}
\item There exists $\Theta_2 \subset ( \theta_1,\pi)$ such
    that $\Leb((\theta_1,\pi) \setminus \Theta_2) \leq
    \eps$ and for all $\theta \in \Theta_2$,
\begin{align*}
\Leb (\Pi_\theta A  ) \leq\eps.
\end{align*}
\end{enumerate}
\end{proposition}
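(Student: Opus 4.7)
The plan is to take $A$ to be a sufficiently late finite-stage approximation in the iterative construction that yields Falconer's set of Proposition \ref{m15.1}. In \cite{falcart}, the set is obtained as a nested intersection $\bigcap_{n \geq 1} A_n$ where each $A_n$ is a finite disjoint union of small axis-aligned squares produced by a self-similar substitution rule; after rotating the whole construction by $\pi/4$, each $A_n$ becomes a diamond set in the sense of the proposition. So the natural candidate for $A$ in Proposition \ref{m15.2} is $A_n$ for $n$ chosen sufficiently large. A trivial affine rescaling at the start of Falconer's construction places every $A_n$ inside $Q_\eps$.

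The next step is to read off from the proof in \cite{falcart} the quantitative finite-stage claim: for $\Leb$-a.e. $\theta \in [0, \theta_1]$ one has $\Leb(\Pi_\theta A_n \om I_\theta) \to 0$ as $n \to \infty$, and for $\Leb$-a.e. $\theta \in (\theta_1, \pi)$ one has $\Leb(\Pi_\theta A_n) \to 0$. Because the maps $\theta \mapsto \Leb(\Pi_\theta A_n \om I_\theta)$ and $\theta \mapsto \Leb(\Pi_\theta A_n)$ are measurable in $\theta$, Egorov's theorem upgrades these a.e. convergences to uniform convergences on measurable sets $\Theta_1 \subset [0, \theta_1]$ and $\Theta_2 \subset (\theta_1, \pi)$ whose complements have Lebesgue measure at most $\eps$. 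Taking $n$ large enough to make the uniform bounds on $\Theta_1$ and $\Theta_2$ each at most $\eps$ then yields the conclusions (i) and (ii).

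The main obstacle is the pointwise statement underlying (i): for $\theta \in [0, \theta_1]$ one has $A_n \supset A$, hence $\Pi_\theta A_n \supset \Pi_\theta A = I_\theta$ up to a null set, so the symmetric difference reduces to the ``overshoot'' $\Pi_\theta A_n \setminus I_\theta$, and the task is to prove this overshoot has vanishing Lebesgue measure as $n \to \infty$. This requires a tightness feature of Falconer's construction, namely that the total length of the projections onto $K_\theta$ of the stage-$n$ diamonds is at most $1 + o(1)$ for a.e. $\theta \in [0, \theta_1]$. That tightness is precisely the claim embedded in the argument of \cite{falcart}; by contrast the pointwise statement behind (ii), that $\Leb(\Pi_\theta A_n) \to 0$ for a.e. $\theta \in (\theta_1, \pi)$, is the standard self-similar projection-decay estimate that is already the heart of Falconer's digital sundial theorem and is considerably easier to quote directly.
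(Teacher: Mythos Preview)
Your overall strategy---take a finite stage of a nested approximation to the Falconer set and upgrade a.e.\ pointwise convergence to uniformity on large angle sets---is sound, and if one grants the nested diamond structure together with compactness then $\Pi_\theta A_n \downarrow \Pi_\theta A$ and Egorov does the rest. This is a genuinely different route from the paper's.

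The paper does not assume that Falconer's construction produces nested diamond sets; in fact the claim it quotes from \cite{falcart} (pp.~58--59) yields a countable cover by open \emph{balls} $B=\bigcup_k \ball_k\subset Q_\eps$ together with the integral bound
\[
\int_0^{\theta_1}\Leb(\Pi_\theta B\setminus I_\theta)\,d\theta
+\int_{\theta_1}^{\pi}\Leb(\Pi_\theta B)\,d\theta \le \eps^2/8,
\]
and $I_\theta\subset\Pi_\theta B$ for $\theta\in[0,\theta_1]$. From there the paper truncates to finitely many balls $B_m$, uses Markov's inequality (not Egorov) to produce the exceptional angle sets, and only then approximates $B_m$ \emph{from the inside} by unions of dyadic diamonds $A_k$, using monotone convergence a second time to control $\Leb(I_\theta\setminus\Pi_\theta A_k)$. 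So the diamond structure is manufactured in a separate step, independent of whatever shapes Falconer's construction actually uses.

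What your approach buys is conceptual cleanliness: one invocation of Egorov replaces several Markov/monotone-convergence steps. What the paper's approach buys is robustness: it needs only the single integral estimate above, not the full nested self-similar structure you assume, and it never needs to argue that projections of nested compacta converge. The potential weak point in your write-up is the assertion that \cite{falcart} builds the sundial set as $\bigcap_n A_n$ with each $A_n$ a finite union of axis-aligned squares via a ``self-similar substitution rule''; that description does not match what the authors cite, and you would need to either verify it against the source or supply the diamond approximation yourself, as the paper does.
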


\begin{proof}
The following claim follows from the assertion made in the part of the proof of Theorem 5.1 in \cite{falcart} on page 58 and the first paragraph on page 59 (the assertion is proved later in the same proof). For any $\eps>0$ there exists a countable family of
open balls $\{\ball_k\}_{k\geq 1}$ such that $B := \bigcup _{k\geq 1} \ball_k  \subset Q_\eps$, and
\begin{align}
&I_\theta \subset \Pi_\theta B, \qquad \text{  for  }
 \theta \in [0, \theta_1], \label{m15.3} \\
& \int_0^{\theta_1} \Leb (\Pi_\theta B \setminus I_\theta )d\theta
+ \int_{\theta_1}^\pi \Leb (\Pi_\theta B )d\theta\leq \eps^2/8.\label{m15.4}
\end{align}
Let $B_n = \bigcup _{1\leq k\leq n} \ball_k$.
Then \eqref{m15.4} implies that
\begin{align*}
& \int_0^{\theta_1} \Leb (\Pi_\theta B_n \setminus I_\theta )d\theta
+ \int_{\theta_1}^\pi \Leb (\Pi_\theta B_n )d\theta \leq  \eps^2/8.
\end{align*}

Let $\Lambda_2^n = \{\theta \in (\theta_1, \pi): \Leb (\Pi_\theta
B_n) \leq \eps\}$. Then
\begin{align*}
\eps \Leb((\theta_1,\pi) \setminus \Lambda_2^n)
\leq \int_{\theta_1}^\pi \Leb (\Pi_\theta B_n )d\theta \leq  \eps^2/8.
\end{align*}
It follows that for any $n$,
\begin{align}\label{m15.7}
\Leb((\theta_1,\pi) \setminus \Lambda_2^n) \leq \eps/8.
\end{align}
Similarly, if $\wt\Lambda_1^n = \{\theta \in [0,\theta_1]: \Leb
(\Pi_\theta B_n \setminus I_\theta) \leq \eps/2\}$ then for any
$n$,
\begin{align*}
(\eps/2) \Leb([0,\theta_1] \setminus \wt\Lambda_1^n )
\leq \int_0^{\theta_1} \Leb (\Pi_\theta B_n\setminus I_\theta )d\theta \leq  \eps^2/8,
\end{align*}
and
\begin{align}\label{m15.5}
\Leb( [0,\theta_1] \setminus \wt\Lambda_1^n) \leq \eps/4.
\end{align}

The sequence of functions $f_n(\theta) : = \Leb
(I_\theta\setminus\Pi_\theta B_n  )$ is monotone and converges
to 0 in view of \eqref{m15.3}. Hence,
\begin{align*}
\lim_{n\to \infty} \int_0^{\theta_1} \Leb (I_\theta\setminus\Pi_\theta B_n )d\theta =0,
\end{align*}
and, therefore, there exists $m$ such that
\begin{align}\label{m15.8}
 \int_0^{\theta_1} \Leb (I_\theta\setminus\Pi_\theta B_m )d\theta \leq \eps^2/8.
\end{align}
Fix an $m$ satisfying this estimate for the rest of the proof.

Let $\calD_k$ be the collection of all diamonds with diameters
of length $2^{-k+1}$ and vertices in the lattice $2^{-k}\Z^2$.
Let $A_k$ be the union of all diamonds that belong to $\calD_k$
and are subsets of $B_m$. Let $\Theta_2^k = \{\theta \in
(\theta_1, \pi): \Leb (\Pi_\theta A_k) \leq \eps\}$. It follows
from \eqref{m15.7} that for any $k$,
\begin{align}\label{m15.9}
\Leb((\theta_1,\pi) \setminus \Theta_2^k) \leq \eps/8.
\end{align}

Let $\wt\Theta_1^k = \{\theta \in [0,\theta_1]: \Leb
(\Pi_\theta A_k \setminus I_\theta) \leq \eps/2\}$. Then for
any $k$, by \eqref{m15.5},
\begin{align}\label{m15.10}
\Leb( [0,\theta_1] \setminus \wt\Theta_1^k) \leq \eps/4.
\end{align}

The sequence of functions $g_k(\theta) : = \Leb
(I_\theta\setminus\Pi_\theta A_k  )$ is monotone and converges
to $\Leb (I_\theta\setminus\Pi_\theta B_m  )$. Hence,
\begin{align*}
\lim_{k\to \infty} \int_0^{\theta_1} \Leb (I_\theta\setminus\Pi_\theta A_k )d\theta =
\int_0^{\theta_1} \Leb (I_\theta\setminus\Pi_\theta B_m )d\theta.
\end{align*}
This and \eqref{m15.8} imply that there exists $k$ such that
\begin{align*}
 \int_0^{\theta_1} \Leb (I_\theta\setminus\Pi_\theta A_k )d\theta \leq \eps^2/4.
\end{align*}
We let $\wh \Theta_1^k = \{\theta \in  [0,\theta_1] : \Leb
(I_\theta\setminus\Pi_\theta A_k ) \leq \eps/2\}$. The we can
argue as above that
\begin{align}\label{m15.6}
\Leb([0,\theta_1] \setminus \wh \Theta_1^k) \leq \eps/2.
\end{align}
Fix a $k$ satisfying this estimate. Let $\Theta_1^k =  \{\theta
\in  [0,\theta_1]  : \Leb (\Pi_\theta A_k \om I_\theta) \leq
\eps\}$. We combine \eqref{m15.10} and \eqref{m15.6} to see
that
\begin{align}\label{m15.11}
\Leb([0,\theta_1] \setminus  \Theta_1^k) \leq \eps.
\end{align}
We now let $A = A_k$, $\Theta _1 = \Theta_1^k$ and $\Theta _2 = \Theta_2^k$. These sets satisfy the proposition in view of  \eqref{m15.9} and \eqref{m15.11}.
\end{proof}

Let $I_{\theta,\rho} = \{z=re^{i\theta}: r\in [0,\rho]\}$
and $Q_{\eps,\rho} = (-\rho\eps, \rho(1+\eps))^2$.
The following corollary follows from Proposition \ref{m15.2} by scaling.

\begin{corollary}\label{m16.1}
For all $\rho>0$, $\eps\in(0,1)$ and $\theta_1\in(0,\pi/2)$
there exists a diamond set $A\subset Q_{\eps,\rho}$ which has the following properties.
\begin{enumerate}[(i)]
\item There exists $\Theta_1 \subset [0, \theta_1]$ such
    that $\Leb([0,\theta_1] \setminus \Theta_1) \leq \eps$
    and for all $\theta \in \Theta_1$,
\begin{align*}
\Leb (\Pi_\theta A \om I_{\theta,\rho} ) \leq\eps\rho.
\end{align*}
\item There exists $\Theta_2 \subset ( \theta_1,\pi)$ such
    that $\Leb((\theta_1,\pi) \setminus \Theta_2) \leq
    \eps$ and for all $\theta \in \Theta_2$,
\begin{align*}
\Leb (\Pi_\theta A  ) \leq\eps\rho.
\end{align*}
\end{enumerate}
\end{corollary}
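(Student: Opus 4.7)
The plan is to obtain the corollary from Proposition \ref{m15.2} by a straightforward rescaling by the factor $\rho$. Given $\rho>0$, $\eps\in(0,1)$ and $\theta_1\in(0,\pi/2)$, I would apply Proposition \ref{m15.2} with the same $\eps$ and $\theta_1$ to obtain a diamond set $A'\subset Q_\eps$ together with direction sets $\Theta_1\subset[0,\theta_1]$ and $\Theta_2\subset(\theta_1,\pi)$ satisfying the conclusions there. Then I would define $A:=\rho A'=\{\rho z:z\in A'\}$ and keep the \emph{same} angular sets $\Theta_1,\Theta_2$, since angles are unaffected by the homothety.

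First I would check that $A$ is a diamond set contained in $Q_{\eps,\rho}$. Each component of $A'$ is a closed square with sides inclined at angle $\pi/4$; the map $z\mapsto \rho z$ is a similarity, so the image of each such square is again a closed square with sides at angle $\pi/4$, i.e.\ still a diamond in the sense defined before Proposition \ref{m15.2}. Since $A'\subset Q_\eps=(-\eps,1+\eps)^2$, we get $A=\rho A'\subset(-\rho\eps,\rho(1+\eps))^2=Q_{\eps,\rho}$.

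Next I would use the fact that orthogonal projection onto $K_\theta$ commutes with scalar multiplication: $\Pi_\theta(\rho A')=\rho\,\Pi_\theta A'$, and analogously $I_{\theta,\rho}=\rho I_\theta$. Both of these are routine since $\Pi_\theta$ is linear. Consequently
\begin{align*}
\Pi_\theta A\ominus I_{\theta,\rho}=\rho\bigl(\Pi_\theta A'\ominus I_\theta\bigr),\qquad \Pi_\theta A=\rho\,\Pi_\theta A',
\end{align*}
and for the $1$-dimensional Lebesgue measure on $K_\theta$, scaling by $\rho$ multiplies $\Leb$ by $\rho$. Therefore for $\theta\in\Theta_1$,
\begin{align*}
\Leb(\Pi_\theta A\ominus I_{\theta,\rho})=\rho\,\Leb(\Pi_\theta A'\ominus I_\theta)\leq \rho\eps,
\end{align*}
and for $\theta\in\Theta_2$, $\Leb(\Pi_\theta A)=\rho\,\Leb(\Pi_\theta A')\leq\rho\eps$, which are precisely the two required estimates. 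The bounds $\Leb([0,\theta_1]\setminus\Theta_1)\leq\eps$ and $\Leb((\theta_1,\pi)\setminus\Theta_2)\leq\eps$ carry over directly from Proposition \ref{m15.2}.

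There is really no obstacle here: the only thing to be slightly careful about is that the class of diamond sets is closed under the similarity $z\mapsto \rho z$ (diamonds may have arbitrary size), and that both the measure of projections and the length of the model segment $I_\theta$ scale linearly under this map, while the set of admissible angles does not change. Once these observations are made explicit, the corollary follows immediately.
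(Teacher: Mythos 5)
Your proposal is correct and is exactly the scaling argument the paper gestures at with ``follows from Proposition \ref{m15.2} by scaling''; you have simply written out the routine verifications (linearity of $\Pi_\theta$, closure of diamond sets under homothety, $I_{\theta,\rho}=\rho I_\theta$, and the scaling of one-dimensional Lebesgue measure) that the paper leaves implicit.
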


\bigskip

\begin{definition}\label{m16.3}
Consider a diamond set $A$ satisfying Corollary \ref{m16.1} for some $\rho>0$, $\eps\in(0,1)$ and $\theta_1\in(0,\pi/2)$. The set $A$ is a finite union of diamonds $\{G_k\}_{1\leq k \leq n}$. We can label the vertices $x^k_1, x^k_2, x^k_3, x^k_4$ of the diamond $G_k$ so that they satisfy $ x^k_2= x^k_1+ r_k (1,1), x^k_3 =x^k_1 + r_k (1,-1)$ and $x^k_4= x^k_1+r_k(2,0)$, for some $r_k>0$ and $x^k_1\in \R^2$.
Let $C_k$ be the union of three closed line segments
$\ol{ x^k_1, x^k_3}$, $\ol{ x^k_2, x^k_4}$ and  $\ol{ (x^k_1+ x^k_2)/2 , (x^k_3+x^k_4)/2}$
(see Fig.~\ref{fig1}).
\begin{figure}
\centering
 \includegraphics[width=5cm]{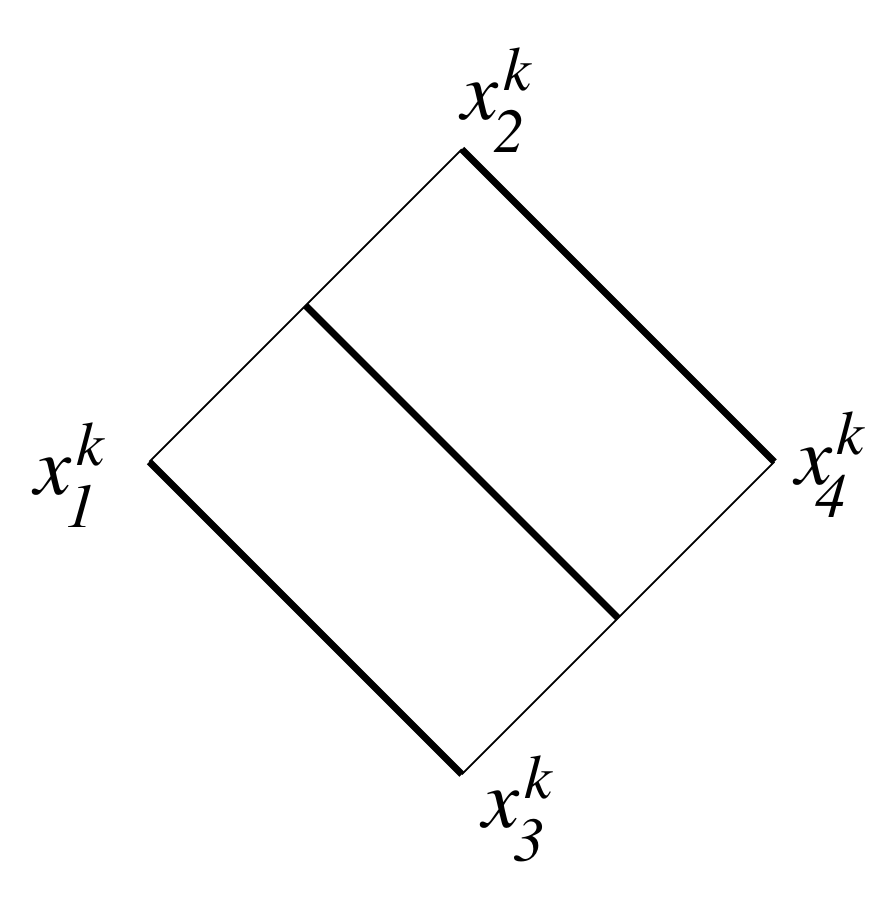}
\caption{A diamond and the three associated line segments.}\label{fig1}
\end{figure}
We let $S =S_{\rho,\theta_1, \eps}= \bigcup_{1\leq k \leq n} C_k$. We will call the set $S$ a \emph{block}.
\end{definition}

\begin{remark}\label{m17.1}
(i) A block consists of a finite number of bounded line segments with slope $-1$.

(ii)
The parameters $\rho, \theta_1$ and $\eps$ do not uniquely define the block $S$.
We will adopt the following convention. We fix a single block $S_{1, \theta_1, \eps}$ among all blocks with parameters $1, \theta_1, \eps$. Then we let
$S_{\rho, \theta_1, \eps} = \{x\in \R^2: x/\rho \in S_{1, \theta_1, \eps}\}$ for every set of parameters
$\rho, \theta_1, \eps$.

(iii) Since $ S_{\rho, \theta_1, \eps} \subset Q_{\eps,\rho}$ and $\eps<1$, the diameter of $ S_{\rho, \theta_1, \eps}$ is less than $3\sqrt{2} \rho$.

(iv)
For our arguments, it is irrelevant that the line segments
in the definition of a block are created in sets of three from each diamond in a diamond set.
We only need a finite family of line segments with slope $-1$ which satisfies estimates for the size of projections of a block given in
Lemma \ref{m16.2} below. A set of the type illustrated in Fig.~\ref{fig2}
\begin{figure}
\centering
 \includegraphics[width=5cm]{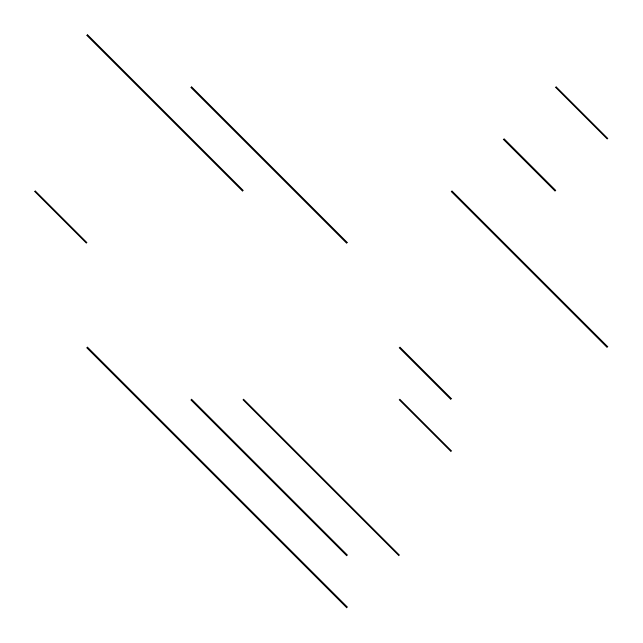}
\caption{A finite family of line segments with slope $-1$.}\label{fig2}
\end{figure}
is perfectly
acceptable as a ``building block'' if it satisfies the conditions listed in Lemma \ref{m16.2}.
\end{remark}

\begin{lemma}\label{m16.2}
Suppose that $\rho>0$, $\eps\in(0,1)$, $\theta_1\in(0,\arctan(1/3))$
and $S= S_{\rho,\theta_1, \eps}$ is a block. 
\begin{enumerate}[(i)]
\item There exists $\Theta_1 \subset [0, \theta_1]$ such
    that $\Leb([0,\theta_1] \setminus \Theta_1) \leq \eps$
    and for all $\theta \in \Theta_1$,
\begin{align*}
\Leb (\Pi_\theta S \om I_{\theta,\rho} ) \leq\eps\rho.
\end{align*}
\item There exists $\Theta_2 \subset ( \theta_1,\pi)$ such
    that $\Leb((\theta_1,\pi) \setminus \Theta_2) \leq
    \eps$ and for all $\theta \in \Theta_2$,
\begin{align*}
\Leb (\Pi_\theta S  ) \leq\eps\rho.
\end{align*}
\end{enumerate}
\end{lemma}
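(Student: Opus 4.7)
The plan is to reduce Lemma \ref{m16.2} to Corollary \ref{m16.1} applied to the underlying diamond set $A = \bigcup_{k=1}^n G_k$ from which the block $S = \bigcup_{k=1}^n C_k$ is built. Two observations do all the work. First, each $C_k$ sits inside the corresponding diamond $G_k$: the two outer pieces $\ol{x^k_1, x^k_3}$ and $\ol{x^k_2, x^k_4}$ are sides of $G_k$, and the middle segment is a chord. Hence $S \subset A$ and $\Pi_\theta S \subset \Pi_\theta A$ for every $\theta$. Taking $\Theta_2$ to be the set supplied by Corollary \ref{m16.1}(ii) gives $\Leb(\Pi_\theta S) \leq \Leb(\Pi_\theta A) \leq \eps\rho$ for every $\theta \in \Theta_2$, which proves (ii) without invoking the hypothesis $\theta_1 < \arctan(1/3)$ at all.

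For (i) I aim for the stronger pointwise identity $\Pi_\theta C_k = \Pi_\theta G_k$ for every $k$ and every $\theta \in [0, \arctan(1/3)]$. Taking unions, this yields $\Pi_\theta S = \Pi_\theta A$ on $[0, \arctan(1/3)] \supset [0, \theta_1]$, so combining with Corollary \ref{m16.1}(i) we get $\Leb(\Pi_\theta S \om I_{\theta,\rho}) = \Leb(\Pi_\theta A \om I_{\theta,\rho}) \leq \eps\rho$ on the very set $\Theta_1$ produced by the corollary.

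The pointwise identity reduces, by translation and by the scaling relation in Remark \ref{m17.1}(ii), to the model diamond with vertices $(0,0),(1,1),(2,0),(1,-1)$. For $\theta \in [0,\pi/4]$ this diamond projects onto $K_\theta$ as the interval $[0, 2\cos\theta]$, while the three associated slope-$(-1)$ segments project to
\[
[0,\,\cos\theta-\sin\theta],\quad \bigl[\tfrac{\cos\theta+\sin\theta}{2},\,\tfrac{3\cos\theta-\sin\theta}{2}\bigr],\quad [\cos\theta+\sin\theta,\,2\cos\theta].
\]
A direct calculation shows that both gaps between consecutive sub-intervals have common length $\tfrac{3\sin\theta-\cos\theta}{2}$, which is nonpositive precisely when $\tan\theta \leq 1/3$; this is the sole place where the standing hypothesis $\theta_1 < \arctan(1/3)$ enters. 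On that $\theta$-range the three sub-intervals overlap and their union is $[0, 2\cos\theta]$, as required. The main (and only) obstacle is spotting $\arctan(1/3)$ as the sharp trigonometric threshold for the three chords to cover the full shadow of the diamond; once that is isolated, everything else is bookkeeping.
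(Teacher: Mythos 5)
Your proof is correct and follows essentially the same route as the paper's: deduce (ii) from $C_k\subset G_k$ and Corollary \ref{m16.1}(ii), and deduce (i) from the identity $\Pi_\theta C_k=\Pi_\theta G_k$ for $\theta\in(0,\arctan(1/3))$ together with Corollary \ref{m16.1}(i). The only difference is that the paper states the projection identity without verification, while you supply the explicit interval computation showing the gap length $\tfrac{3\sin\theta-\cos\theta}{2}$ vanishes exactly at $\tan\theta=1/3$ --- a worthwhile addition (your citation of Remark \ref{m17.1}(ii) for the reduction to the model diamond is not quite the right reference, but the translation-and-dilation reduction is trivially valid).
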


\begin{proof}
Let $A,G_k,C_k,\Theta_1$ and $\Theta_2$ be as in Corollary \ref{m16.1} and Definition \ref{m16.3}.
We have $\Leb([0,\theta_1] \setminus \Theta_1) \leq \eps$.
If $\theta\in(0,\arctan(1/3))$
then $\Pi_\theta C_k = \Pi_\theta G_k$. Hence,
by Corollary \ref{m16.1} (i), for all $\theta \in \Theta_1$,
\begin{align*}
\Leb (\Pi_\theta S \om I_{\theta,\rho} ) \leq\eps\rho.
\end{align*}

Since $C_k\subset G_k$, Corollary \ref{m16.1} implies that
$\Leb((\theta_1,\pi) \setminus \Theta_2) \leq \eps$ and
for all $\theta \in \Theta_2$,
\begin{align*}
\Leb (\Pi_\theta S  ) \leq\eps\rho.
\end{align*}
This completes the proof.
\end{proof}

\section{The sea urchin}\label{hedge}

Fix an arbitrarily small $\eps>0$. Our construction of the set
$F$ in Theorem \ref{m16.10} will have several parameters---real numbers
$r_1,\rho,\eps_1>0$ and an integer $N$. 
Assume that
\begin{align}
&\eps_1\rho N^2/(\pi r_1) < \eps/(16 \pi), \label{m18.6}\\
&N \eps_1 < \eps/(16N). \label{m18.12}
\end{align}
We will make more assumptions later in the proof.

Recall that $r_1>0$ is a (small) real number and let $N>0$ be a (large) integer divisible by 4. Let $a_k = r_1 \exp( i (k-1/2)2\pi/N)$ for $0\leq k \leq N $ and note that $a_N = a_0$. Let $M'_k$ be the closed rectangle with two of its adjacent vertices equal to $a_k$ and $a_{k+1}$, and the other two vertices on the unit circle $\prt D$. Moreover, we require that $M'_k$ does not contain $0$; this uniquely identifies $M'_k$. We let $M_k = M'_k \cup M'_{k+N/2}$ for $0\leq k \leq N/2 -1$. The set $M_k$ consists of two thin rectangles with parallel sides; their long sides lie on the same straight lines.
\begin{figure}
\centering
 \includegraphics[width=7cm]{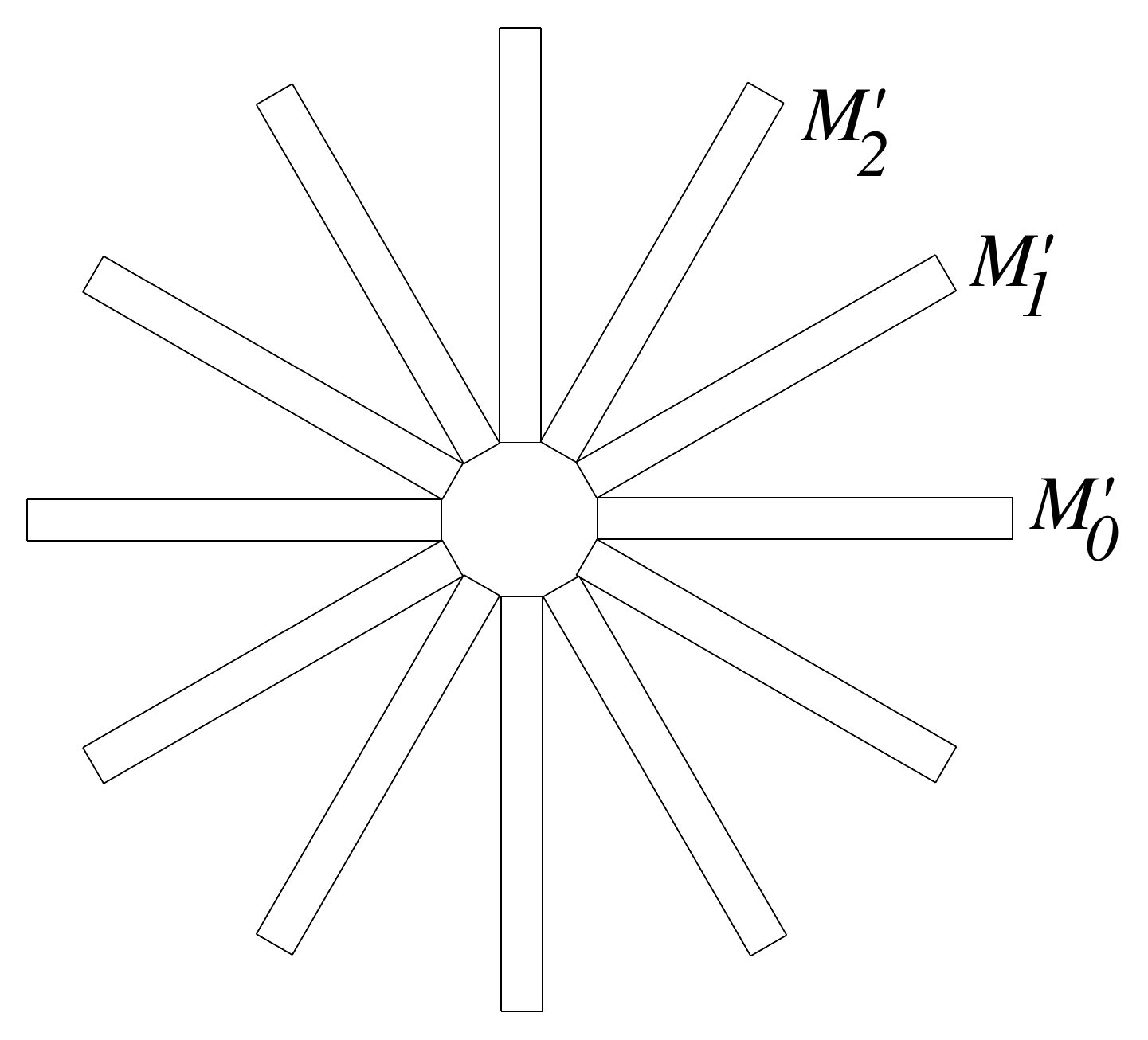}
\caption{Sets $M'_k$  are long rectangles which hold rows of blocks reflecting light rays.}\label{fig3}
\end{figure}
See Fig.~\ref{fig3}.

Recall definitions of squares $Q_{\eps_1,\rho}$ and blocks $S_{\rho, \theta_1, \eps_1}$ from Section \ref{blocks}.
We let $\theta_1 = 2\pi/N$ and assume that $N$ is so large that
$\theta_1\in(0,\arctan(1/3))$. Note that the width of
$M'_k$ is $|a_1-a_0|$ and the side of $Q_{\eps_1,\rho}$ is
$\rho(1+ 2\eps_1)$. We choose the values of the parameters so
that $q:= \rho(1+ 2\eps_1) = |a_1-a_0|$. Let $T_j: \R^2 \to
\R^2$ be the translation which maps $(-\rho \eps_1, -\rho\eps_1)$
to $a_0 + (j q, 0)$. The translations $T_j$, $j=0,1,2, \dots$, map
$Q_{\eps_1,\rho}$ onto adjacent squares which fill
up the rectangle $M'_0$ and then extend beyond $M'_0$. Let $j_*$ be the largest $j$ such that
$T_j(Q_{\eps_1,\rho}) \subset M'_0$. Then we let $F'_0 =
\bigcup_{0\leq j \leq j_*} T_j(S_{\rho, \theta_1, \eps_1})$,
$F'_k = e^{i2\pi k/N} F'_0$, $F_k = F'_k \cup F'_{k+N/2}$, and
$F = \bigcup_{0\leq k \leq N/2-1} F_k$.

Heuristically speaking, we placed blocks congruent to $S_{\rho, \theta_1, \eps_1}$ in long rows in $M_k$'s, tightly against each other. The  thin rows of blocks form a spiny sea urchin shape (see Fig.~\ref{fig3}).

We will prove that the set $F$ satisfies Theorem \ref{m16.10},
provided we choose appropriate values of the parameters of the
construction.

\subsection{The shadow}\label{shad}

\begin{proof}[Proof of Theorem \ref{m16.10} (i)]

We parametrize $\bv$ and $\bw$ using polar coordinates as follows, $\bv = e^{i(\theta+\pi/2)}$ and $\bw(r) = re^{i\theta}$; we will suppress $\theta$ in this notation.
We will estimate the measure of the set of $r\in(0,1)$ such that
$L_{\bv, \bw(r)}$ intersects $F_0$ for some values of $\theta$.

Let $I$ be the horizontal line segment extending from the vertical axis to the right hand side half of the boundary of the unit disc, at the level $-q/2$.  This line segment contains the lower horizontal side of $M'_0$.
Let $I_* = I \cap \bigcup_{0\leq j \leq j_*} T_j(Q_{\eps_1,\rho})$.
Suppose that $\theta\in [0,\theta_1]$.
It is elementary to see that we can choose
$N$ so large (and, hence, $\theta_1$ so small) that
$L_{\bv, \bw(r)} \cap I_* = \emptyset$ for $r\leq 0$. Moreover, we can choose
$r_1>0$ so small that for every $\theta\in [0,\theta_1]$,
\begin{align}\label{m19.1}
\frac
{\Leb(\{r>0: L_{\bv, \bw(r)} \cap I_* \ne \emptyset\})}
{\Leb(\{r>0: L_{\bv, \bw(r)} \cap D \ne \emptyset\})}
\geq 1- \eps/2^9.
\end{align}

Consider $\theta \in \Theta_1$, where $\Theta_1$ is as in Lemma
\ref{m16.2}, with $\eps$ replaced by $\eps_1$. Suppose that $L_{\bv, \bw(r)} \cap I_* \cap
T_0(Q_{\eps_1,\rho}) \ne \emptyset$. Then $L_{\bv, \bw(r)}$ may
fail to intersect $F_0$ if it crosses $T_0(Q_{\eps_1,\rho})$ to the left of
$T_0((0,0))$ or to the right of $T_0((\rho,0))$. The measure of the set of $r$ 
such that $\theta \in \Theta_1$, $L_{\bv, \bw(r)} \cap I_* \cap
T_0(Q_{\eps_1,\rho}) \ne \emptyset$, and $L_{\bv, \bw(r)}$ crosses $T_0(Q_{\eps_1,\rho})$ to the left of
$T_0((0,0))$ or to the right of $T_0((\rho,0))$
is bounded by
$ 2 \rho \eps_1 \cos \theta $. It follows from Lemma \ref{m16.2} (i) that the measure of the set of $r$ such that $L_{\bv, \bw(r)}$  crosses $
T_0(Q_{\eps_1,\rho})$ to the right of $T_0((0,0))$ and to the left of $T_0((\rho,0))$, and $L_{\bv,
\bw(r)}$ does not intersect $T_0(S_{\rho, \theta_1, \eps_1})$
is bounded by $\rho\eps_1$. Combining the two estimates, we
obtain
\begin{align*}
&\frac
{\Leb(\{r>0: L_{\bv, \bw(r)} \cap I_* \cap T_0(Q_{\eps_1,\rho})\ne \emptyset,
L_{\bv, \bw(r)} \cap F_0 \ne \emptyset
\})}
{\Leb(\{r>0: L_{\bv, \bw(r)} \cap I_* \cap T_0(Q_{\eps_1,\rho})\ne \emptyset\})}
\\
&\geq \frac{ \rho(1+2\eps_1) \cos \theta - 2\rho \eps_1 \cos \theta -\rho\eps_1}
{ \rho(1+2\eps_1) \cos \theta}\\
&= \frac {1-\eps_1/\cos \theta} {1+2\eps_1}\\
&\geq \frac {1-\eps_1/\cos \theta_1} {1+2\eps_1}.
\end{align*}
We may assume that $\theta_1$ is so small that $\cos \theta_1 \ge 1/2$. Hence the last expression is bounded from below by $(1- 2 \eps_1)/(1+2\eps_1)$. 

The same estimate holds for $T_j$ in place of $T_0$ for $j=1,\dots, j_*$, by translation invariance. Since
$I_* = I \cap \bigcup_{0\leq j \leq j_*} T_j(Q_{\eps_1,\rho})$, this implies that
\begin{align*}
\frac
{\Leb(\{r>0: L_{\bv, \bw(r)} \cap I_* \ne \emptyset,
L_{\bv, \bw(r)} \cap F_0 \ne \emptyset
\})}
{\Leb(\{r>0: L_{\bv, \bw(r)} \cap I_* \ne \emptyset\})}
&\geq \frac {1-2\eps_1} {1+2\eps_1}.
\end{align*}
We can now make $\eps_1>0$ so small that the last estimate and \eqref{m19.1} yield
\begin{align*}
\frac
{\Leb(\{r>0:
L_{\bv, \bw(r)} \cap F_0 \ne \emptyset
\})}
{\Leb(\{r>0: L_{\bv, \bw(r)} \cap D \ne \emptyset\})}
&\geq 1-\eps/2^8.
\end{align*}

Recall that $\bv = e^{i(\theta+\pi/2)}$, $\bw(r) =
re^{i\theta}$ and $\Theta_1 \subset [0,\theta_1]$. The last
inequality implies that the $\mtwo$-measure of $(\bv,\bw)$ such
that $\theta \in \Theta_1$, $r\in(0,1)$ and $L_{\bv, \bw(r)}
\cap F_0 = \emptyset$ is bounded above by $\theta_1 \eps/2^8$.
According to Lemma \ref{m16.2} (i), $\Leb([0,\theta_1]
\setminus \Theta_1) \leq \eps_1$. It follows that the
$\mtwo$-measure of $(\bv,\bw)$ such that $\theta \in
[0,\theta_1]$, $r\in(0,1)$ and $L_{\bv, \bw(r)} \cap F_0 =
\emptyset$ is bounded above by $\theta_1 \eps/2^8 + \eps_1$.
Summing over all intervals of the form $[k\theta_1, (k+1)
\theta_1]$ and taking into account both $\bv =
re^{i(\theta+\pi/2)}$ and $\bv = re^{i(\theta-\pi/2)}$, we
obtain the following estimate,
\begin{align*}
\mtwo(\{(\bv, \bw)\in \calV:
L_{\bv, \bw} \cap F = \emptyset\})
&\leq 2 N (\theta_1 \eps/2^8 + \eps_1)
= 2 N ((2\pi/N)\eps/2^8 + \eps_1)\\
&\leq \eps/16 + 2N\eps_1.
\end{align*}
We can now make $\eps_1$ so small that the right hand side is less than $ \eps/8 $, i.e.,
\begin{align}\label{m24.1}
\mtwo(\{(\bv, \bw)\in \calV:
L_{\bv, \bw} \cap F = \emptyset\})
< \eps/8.
\end{align}

We have constructed a set $F$ satisfying part (i) of Theorem
\ref{m16.10}. Of course, part (i) is a trivial statement by
itself. We will have to show that the same set $F$ satisfies
part (ii) of the theorem.
\end{proof}

\section{Light ray reflections}\label{reflect}

\begin{lemma}\label{m18.14}
Recall that $\eps >0$ is fixed. There exists a set $\calA_1
\subset \calV$ such that $\mtwo(\calV \setminus \calA_1) \leq
\eps/8 $ and if $(\bv, \bw) \in \calA_1$, $\bw = r e^{i
\theta}$, and $k$ satisfies $\theta - 2\pi k /N \in [0,
\theta_1]\cup [\pi,\pi+\theta_1]$ then
\begin{align}\label{m20.2}
\Leb (\Pi_\theta (F\setminus  M_k)) \leq  \eps/(16 \pi) .
\end{align}
\end{lemma}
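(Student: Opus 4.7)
The plan is to decompose $F\setminus M_k = \bigcup_j F'_j$, where the union is over $j\in\{0,\ldots,N-1\}\setminus\{k\bmod N,(k+N/2)\bmod N\}$, and to bound $\Leb(\Pi_\theta F'_j)$ for each admissible $j$ via Lemma~\ref{m16.2}(ii). Since $F'_j = e^{i2\pi j/N}F'_0$ and projection onto $K_\theta$ is rotation-covariant (and depends only on $\theta$ modulo $\pi$), one has $\Leb(\Pi_\theta F'_j) = \Leb(\Pi_{\beta_j} F'_0)$, where $\beta_j := (\theta - 2\pi j/N)\bmod\pi \in [0,\pi)$. A short arithmetic check, using the hypothesis $\theta-2\pi k/N\in[0,\theta_1]\cup[\pi,\pi+\theta_1]$ together with $j\not\equiv k\pmod{N/2}$, confirms that $\beta_j\in[\theta_1,\pi)$, placing it in the range where Lemma~\ref{m16.2}(ii) is informative.

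Next I would unpack $F'_0 = \bigcup_{l=0}^{j_*} T_l(S_{\rho,\theta_1,\eps_1})$ and use translation invariance and subadditivity of projections to get $\Leb(\Pi_{\beta_j} F'_0) \leq (j_*+1)\,\Leb(\Pi_{\beta_j} S_{\rho,\theta_1,\eps_1})$. The count satisfies $j_*+1 \leq L/q$, where $L\leq 1$ is the length of $M'_0$ and $q=\rho(1+2\eps_1)$, hence $j_*+1 \leq 1/\rho$. Lemma~\ref{m16.2}(ii) furnishes $\Theta_2\subset(\theta_1,\pi)$ with $\Leb((\theta_1,\pi)\setminus\Theta_2)\leq\eps_1$ and $\Leb(\Pi_\beta S_{\rho,\theta_1,\eps_1})\leq\eps_1\rho$ for $\beta\in\Theta_2$. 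When every $\beta_j$ lies in $\Theta_2$, these inequalities chain to $\Leb(\Pi_\theta F'_j)\leq \eps_1$, and summing over the at most $N$ admissible $j$ gives $\Leb(\Pi_\theta(F\setminus M_k)) \leq N\eps_1$. The target $\eps/(16\pi)$ then follows from \eqref{m18.12}, which yields $N\eps_1 < \eps/(16N) \leq \eps/(16\pi)$ for $N\geq \pi$.

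To construct $\calA_1$, I call $\theta\in[0,2\pi)$ bad if $\beta_j\notin\Theta_2$ for some admissible $j$. For each fixed $j$, the bad $\theta$-set has Lebesgue measure $\leq 2\eps_1$ (the factor $2$ accounting for the two-fold lift from $[0,\pi)$ to $[0,2\pi)$), so by a union bound the total bad $\theta$-set has measure $\leq 2N\eps_1$. Define $\calA_1 \subset \calV$ to consist of those $(\bv,\bw)$ with $\bw=re^{i\theta}$ and $\theta$ not bad; integrating out $r\in[0,1]$ and the two-fold choice of $\bv$ yields $\mtwo(\calV\setminus\calA_1) \leq 4N\eps_1 \leq \eps/8$ via \eqref{m18.12}, provided $N\geq 2$.

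The main obstacle I expect is the angular bookkeeping of the first step: under the hypothesis on $k$, one must verify carefully that $\beta_j \geq \theta_1$ for every $j\not\equiv k\pmod{N/2}$, with no residue being forced to sneak close to $0$ modulo $\pi$. Otherwise a row $F'_j$ would project essentially parallel to the direction $\theta$ and cover nearly the full length of $M'_j$ in the projection, which would break the per-row bound and the entire row-by-row strategy.
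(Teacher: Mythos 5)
Your proof is essentially correct and follows the same strategy as the paper: bound the projection row-by-row (or, equivalently, block-by-block) using Lemma~\ref{m16.2}(ii), and define $\calA_1$ by excluding the small set of angles where the per-row estimate fails. The accounting differs cosmetically---you obtain the bound $N\eps_1$ via the per-row count $j_*+1\leq 1/\rho$ and invoke \eqref{m18.12}, whereas the paper counts all blocks at once ($\leq N^2/(\pi r_1)$, using $j_* \leq N/(\pi r_1)$) and invokes \eqref{m18.6}; both are of the same order and both hypotheses are in force, so either finishes. Similarly, your union bound over admissible $j$ and the paper's intersection $\Theta_2^*=\bigcap_k \Theta_2^k$ are dual formulations of the same measure estimate.

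The angular bookkeeping you flag as the main concern does check out: writing $\delta = (\theta-2\pi k/N)\bmod\pi \in[0,\theta_1]$ and $j\equiv k+m\pmod N$ with $m\not\equiv 0\pmod{N/2}$, one has $\theta-2\pi j/N \equiv \delta - m\theta_1 \pmod\pi$, and since $\theta_1=2\pi/N$ the shift $m\theta_1\bmod\pi$ takes values in $\{\theta_1,2\theta_1,\dots,(N/2-1)\theta_1\}$; adding $\delta\in[0,\theta_1]$ then keeps $\beta_j$ in $[\theta_1,\pi]$, with $\beta_j=0$ or $\beta_j=\pi$ only at the single endpoint $\delta=\theta_1$, $m=N/2-1$, which is Lebesgue-null in $\theta$. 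So no $F'_j$ with $j$ admissible projects at a near-grazing angle, and the per-row bound is valid.
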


\begin{proof}
Let $\Theta_2$ be as in Lemma \ref{m16.2} (ii), with $\eps$ replaced by $\eps_1$. Let
\begin{align*}
\Theta'_2 &= \{t \in \R: t + m \pi \in \Theta_2
\text {  for some  } m \in \Z\},\\
 \Theta^k_2 &=  \Theta'_2 + 2\pi k/N,\\
\Theta^*_2 &= \bigcap_{1\leq k \leq N/2-1}
 \Theta ^k_2.
\end{align*}
Then \begin{align}\label{m20.1} \Leb( ([0, \theta_1]\cup
[\pi,\pi+\theta_1]) \setminus \Theta^*_2) \leq N \eps_1 <
\eps/(16N),
\end{align}
by Lemma \ref{m16.2} (ii) and \eqref{m18.12}. Let $\calA_1$ be
the set of all $(\bv,\bw)= (e^{i(\theta+\pi/2)}, \bw) \in
\calV$ such that if  $k$ satisfies  $\theta -2\pi k/N \in [0, \theta_1 ] \cup
[ \pi, \pi  + \theta_1]$ then $\theta \in \Theta^{*}_2 + 2\pi k/N$. It follows from Definition \ref{m18.5} of $\mtwo$ and
\eqref{m20.1} that $\mtwo(\calV \setminus \calA_1) \leq \eps/8$. It remains to prove \eqref{m20.2}.

Recall the integer $j_*$ used in the construction of $F$. It is
elementary to check that if $N$ is large then $j_* \leq N/( \pi
r_1) $. Hence, the number of block images $T_j(S_{\rho,
\theta_1, \eps_1})$ inside $M'_0$ is bounded by $N/( \pi r_1) $.
Therefore, the number of (rotated) block images in $F$ is
bounded by $N^2/( \pi r_1) $. According to Lemma \ref{m16.2} (ii)
we have $\Leb (\Pi_\theta T_j(S_{\rho, \theta_1, \eps_1})  )
\leq\eps_1\rho$ for $\theta \in \Theta_2$. Hence, we obtain
using \eqref{m18.6},
\begin{align}\label{m20.3}
\Leb (\Pi_\theta (F\setminus  M_0)  ) \leq\eps_1\rho N^2/( \pi r_1) < \eps/(16 \pi),
\end{align}
for $\theta \in \Theta^*_2$.

If $(\bv,\bw)= (e^{i(\theta+\pi/2)}, \bw) \in\calA_1 $ and $k$ satisfies $\theta -2\pi k/N \in [0, \theta_1 ] \cup [ \pi,
\pi + \theta_1]$ then $\theta \in \Theta^{*}_2 + 2\pi k/N$.
For such $\theta$, we obtain from \eqref{m20.3}, by rotation invariance,
\begin{align*}
\Leb (\Pi_\theta (F\setminus  M_k)  )  < \eps/(16 \pi) .
\end{align*}
\end{proof}

We will now describe the path of a light ray reflecting from
mirrors in $F$ in general terms. Let $\calA_1$ be as in Lemma
\ref{m18.14}. Suppose that the light ray arrives along the line
$\Gamma_{\bv,\bw}$ with $(\bv, \bw) \in \calA_1$, $\bv = e^{i
(\theta+\pi/2)}$, and $\theta  \in [0, \theta_1]$. According to
Lemma \ref{m18.14}, this ray is very unlikely to hit $F$ before
hitting $M_0$. So let us suppose that it did not hit $F$ before
hitting $M_0$.

The set $F_0$ consists of a finite number of line segments with
slope $-1$. The light ray may reflect from a number of them.
After each reflection, it will move along a line
$L_{\bv_1,\bw_1}$ with $\bv_1$ equal either to $ e^{i
(\theta+\pi/2)}$ or $ e^{i (\pi-\theta)}$. It is clear that
after a finite number of reflections, the light ray will leave
the set $M_0$. We will argue that at the time the light ray
leaves $M_0$, it is very likely to move along a line
$L_{\bv_2,\bw_2}$ with $\bv_2= e^{i (\theta+\pi/2)}=\bv$.

Next, the light ray will have another chance to reflect from
$F\setminus M_0$. We will show that the chance that the light
ray will hit $F\setminus M_0$ is very small, once again using Lemma
\ref{m18.14}.

In summary, a typical light ray arriving in the direction $\bv
= e^{i (\theta+\pi/2)}$ with $\theta  \in [0, \theta_1]$ will
avoid hitting $F\setminus M_0$ on the way to $M_0$, then it
will follow a zigzag path inside $M_0$, and then it will leave
the unit disc without hitting $F\setminus M_0$ on the way out.
A similar analysis applies to light rays arriving from other
directions.

\subsection{Invariance principle for light rays}

By abuse of language, we will refer to $\calA\subset\calV$ as a bundle of light rays, although it would be more precise to say the a bundle of light rays consists of all light rays $\Gamma_{\bv,\bw}$ with $(\bv,\bw)\in \calA$.

\begin{lemma}\label{m20.4}
Recall that $K_\theta = \{z=re^{i\theta}: r\in \R\}$ and
suppose that $B_1 \subset K_\theta$ and $\Leb(B_1)= b_1$.
Consider a bundle $\calB_1 = \{ (\bv, \bw): \bw \in B_1, \bv =
e^{i (\theta+\pi/2)} \}$ of parallel light rays. Suppose that
all light rays in $\calB_1$ reflect from a bounded set $C$
consisting of a finite number of parallel line segments and
then escape as two bundles of light rays $\calB_2 = \{
(\bv, \bw): \bw \in B_2, \bv = e^{i (\theta+\pi/2)} \}$ and
$\calB_3 = \{ (\bv, \bw): \bw \in B_3, \bv = e^{i
(\alpha+\pi/2)} \}$, with $\alpha \in [0,2\pi)$, $B_2 \subset K_\theta$, $\Leb(B_2)=
b_2$, $B_3 \subset K_\alpha$, and $\Leb(B_3)= b_3$. Let $\calH$
be the transformation that takes an element of $\calB_1$ and
maps it to the outgoing light ray in $\calB_2 \cup \calB_3$.
The transformation $\calH$ is one-to-one, except for a finite
number of lines in $\calB_1$ for which $\calH$ is not uniquely
defined because these light rays encounter endpoints of mirrors
on their way. 

(i) We have $b_2 + b_3 = b_1$. 

(ii) Moreover, $\calH^{-1}
(\calB_2) = \calB'$ and $\calH^{-1} (\calB_3) = \calB''$, where
$\calB' = \{ (\bv, \bw): \bw \in B', \bv = e^{i
(\theta+\pi/2)} \}$, $\Leb(B')= b_2$, $\calB'' = \{ (\bv, \bw):
\bw \in B'', \bv = e^{i (\theta+\pi/2)} \}$, and $\Leb(B'')=
b_3$.
\end{lemma}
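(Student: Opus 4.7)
The argument rests on two structural observations: since $C$ consists of \emph{parallel} segments (say along direction $\beta$), the set of possible directions a ray may have after any finite number of reflections in $C$ has only two elements; and every specular reflection is an isometry of $\R^2$, hence preserves $1$-dimensional Lebesgue measure on every line. Reflection across a line of angle $\beta$ sends direction $e^{i\gamma}$ to $e^{i(2\beta-\gamma)}$, so starting from $e^{i(\theta+\pi/2)}$ one obtains after any finite sequence of reflections in $C$ either $e^{i(\theta+\pi/2)}$ itself (even number of reflections) or a single other value, which must therefore equal $e^{i(\alpha+\pi/2)}$. This justifies the form of the outgoing bundles $\calB_2$ and $\calB_3$.

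For part (ii), the sets $\calH^{-1}(\calB_2)$ and $\calH^{-1}(\calB_3)$ are subsets of $\calB_1$, hence are automatically parallel bundles in direction $e^{i(\theta+\pi/2)}$; write them as $\calB'$ and $\calB''$ with $B',B''\subset B_1$. The content of the claim is that $\Leb(B')=b_2$ and $\Leb(B'')=b_3$. To verify this I would classify each ray of $\calB_1$ by the finite ordered sequence $\sigma$ of mirrors that it encounters before escaping; since $C$ has finitely many segments and rays in such a configuration cannot be trapped (cf.\ Proposition~2.1 of \cite{oakbss}), this yields a countable measurable partition $B'=\bigsqcup_\sigma B'_\sigma$. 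On each piece $B'_\sigma$ the map $\calH$ is the restriction of a composition of finitely many reflections across lines, that is, a single rigid motion $\Phi_\sigma$ of $\R^2$; because $\Phi_\sigma$ preserves direction $e^{i(\theta+\pi/2)}$, it maps $K_\theta$ to itself, and its restriction to $B'_\sigma$ is an isometry onto a subset of $K_\theta$ of the same Lebesgue measure. Summing over $\sigma$ gives $\Leb(B')=\Leb(B_2)=b_2$. The argument for $B''$ is identical, except that the composite rigid motion now sends $K_\theta$ onto $K_\alpha$.

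Part (i) then follows immediately: up to the finite set of rays that strike mirror endpoints, every ray of $\calB_1$ escapes in exactly one of the two possible directions, so $B_1=B'\sqcup B''$ up to a null set and $b_1=b_2+b_3$. The main obstacle I anticipate is bookkeeping: one must check that the mirror-sequence partition is genuinely measurable, and that the images $\calH(B'_\sigma)\subset K_\theta$ (resp.\ $K_\alpha$) are essentially pairwise disjoint, so that the isometry contributions add up to $\Leb(B_2)$ (resp.\ $\Leb(B_3)$) rather than over-count. Both points are routine consequences of the finiteness of $C$ and the piecewise-linear structure of ray trajectories, and require no new idea beyond the non-trapping property already invoked in Section~\ref{intro}.
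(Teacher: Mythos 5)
Your proof is correct, but it fills in the argument along a somewhat different line than the paper's. The paper's proof is a one-line induction on the number of line segments in $C$ (``obvious for a single segment; induct''), with the general fact attributed parenthetically to standard billiard theory. You instead give a direct argument: partition $B_1$ (up to the finite exceptional set) by the itinerary $\sigma$ of mirrors encountered; on each piece $\calH$ is the restriction of a fixed composition of reflections, hence a rigid motion $\Phi_\sigma$ of $\R^2$; and since rigid motions preserve distances between parallel lines, the induced map on perpendicular offsets $\bw\mapsto\bw_1$ is an isometry of a subset of $K_\theta$ onto a subset of $K_\theta$ or $K_\alpha$. Summing over the (finite, since the map is piecewise affine with finitely many breakpoints) itinerary classes and using the stated injectivity of $\calH$ to avoid overcounting gives (ii), and (i) follows since $B_1 = B' \sqcup B''$ up to a null set. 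This is essentially the content behind the billiard-theory references the paper cites, made explicit; the paper's suggested induction on segments would, when unwound, rest on the same isometry observation. One small imprecision in your write-up: $\Phi_\sigma$ does not literally map $K_\theta$ to itself (it need not fix the origin); what is true, and what you actually use, is that the induced map on perpendicular offsets is an isometry because $\Phi_\sigma$ preserves the distance between parallel lines. With that phrasing tightened, the argument stands.
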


\begin{proof}

The claim is obvious if $C$ consists of a single line segment. The general statement can be easily proved by induction on the number of line segments in $C$.
We leave the details to the reader.
\end{proof}

We note parenthetically that Lemma \ref{m20.4} is a special case of a well known and more general theorem in the theory of billiards, see \cite[Thm. 3.1]{T} or \cite[Lemma 2.35]{CM}.

\bigskip

Consider $\theta \in [0, \theta_1]$ and the bundle $\calB_1$ of
light rays $(\bv, \bw)$ such that $\bv = e^{i (\theta+\pi/2)}$
and $L_{\bv, \bw} \cap M_0 \ne \emptyset$. We can write
$\calB_1 = \{ (\bv, \bw): \bw \in B_1, \bv = e^{i
(\theta+\pi/2)} \}$ for some $B_1 \subset K_\theta$. Let $b_1 =
\Leb(B_1)$. In the following lemma, we will ignore the set $F\setminus F_0$, that is, we will consider
the effect of reflections in $F_0$ on the light rays in
$\calB_1$. After the light rays leave the set $M_0$, they will
form  two bundles of parallel light rays $\calB_2 = \{ (\bv,
\bw): \bw \in B_2, \bv = e^{i (\theta+\pi/2)} \}$ and $\calB_3
= \{ (\bv, \bw): \bw \in B_3, \bv = e^{i (\pi - \theta)} \}$,
where $B_2 \subset K_\theta$, $\Leb(B_2)= b_2$, $B_3 \subset
K_{\pi/2 - \theta}$, and $\Leb(B_3)= b_3$.

\begin{lemma}\label{m20.5}

Suppose that $\eps>0$ and $\theta \in [0, \theta_1]$ are fixed.

(i) We can make $r_1>0$ so small and $N$ so large that
$b_3<  \eps/(32 \pi) $.

(ii) Let $B_4 \subset B_2$ be the set of all $\bw_1$ such that $\Gamma_{\bv, \bw_1}$ with
$(\bv, \bw_1) \in \calB_2$ is the escape trajectory for some
light ray arriving along $\Gamma_{\bv, \bw}$ with $(\bv, \bw)\in \calB_1$ and $|\bw -
\bw_1| \geq \eps$. Let $b_4 = \Leb(B_4)$. We can make $r_1>0$
so small and $N$ so large that $b_4 <  \eps/(32 \pi) $.
\end{lemma}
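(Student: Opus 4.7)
For part (i), I observe that every ray in $\calB_3$ exits in direction $\bv' = e^{i(\pi-\theta)} \ne \bv$ and hence has undergone at least one reflection; letting $p \in F_0$ denote the last such point, the escape trajectory emanates from $p$ in direction $\bv'$ and its perpendicular offset equals $\Pi_{\pi/2-\theta}(p)$, so $B_3 \subseteq \Pi_{\pi/2-\theta}(F_0)$. I then bound this projection by summing block-by-block over the at most $2(j_*+1) \leq 2N/(\pi r_1) + 2$ (possibly $\pi$-rotated) copies of $S_{\rho,\theta_1,\eps_1}$ comprising $F_0$: by Lemma~\ref{m16.2}(ii), each contributes at most $\eps_1 \rho$, provided $\pi/2 - \theta$ lies in the good set $\Theta_2$, which holds after restricting $\theta$ to a full-measure subset of $[0,\theta_1]$. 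A mild strengthening of \eqref{m18.6} then yields $b_3 < \eps/(32\pi)$.

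For part (ii), I first apply Lemma~\ref{m20.4}(ii) to pull the problem back to the input side: $\Leb(\calH^{-1}(B_4)) = b_4$, so it suffices to bound the measure of inputs $\bw \in B_1$ whose outputs satisfy $|\bw - \bw_1| \geq \eps$. The projection identities $\bw = \Pi_\theta(p^\flat)$ and $\bw_1 = \Pi_\theta(p^\sharp)$ for the first and last reflection points of the trajectory recast the shift condition as a horizontal separation $|p^\sharp_x - p^\flat_x| \geq \eps$ (up to a negligible $q\tan\theta$ correction). Since all blocks in $F'_0$ are translates of a single template and hence share the same $y$-range of mirrors, a reflecting ray typically completes its reflections within the block it entered, contributing shift at most the block diameter $3\sqrt{2}\rho$; choosing $\rho$ so that $3\sqrt{2}\rho < \eps$ excludes such rays from $B_4''$, and the task reduces to bounding the measure of inputs whose trajectories escape laterally across block boundaries.

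The hard part will be bounding this set of laterally escaping inputs. My plan is to observe that lateral escape from a block requires the first reflection to occur near the ``left frontier'' of the block's mirror arrangement at the reflection height $y$, and that this frontier forms a subset of $F_0$ whose horizontal measure is controlled through the block's $y$-projection bound from Lemma~\ref{m16.2}(ii). Summing this frontier measure over the $\leq 2(j_*+1)$ blocks and using the measure-preserving correspondence between inputs and first-reflection points (via Lemma~\ref{m20.4}), together with the tight block $x$-shadow from Lemma~\ref{m16.2}(i), bounds the admissible inputs by $O(\eps_1 \rho N/(\pi r_1))$, which becomes $< \eps/(32\pi)$ after a further tightening of \eqref{m18.6}.
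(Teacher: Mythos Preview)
Your argument for part~(i) reaches the right containment $B_3 \subseteq \Pi_{\pi/2-\theta}(F_0)$, but then takes an unnecessary detour through Lemma~\ref{m16.2}(ii), which only yields the bound when $\pi/2-\theta \in \Theta_2$. Since the lemma is stated for a \emph{fixed} $\theta\in[0,\theta_1]$, and $\Theta_2$ excludes a set of measure up to $\eps_1$ (not zero, so your ``full-measure subset'' is inaccurate), your proof does not cover every $\theta$ as required. The paper's argument is far simpler: it uses only $F_0\subset M_0$, so $B_3\subset \Pi_{\pi/2-\theta}(M_0)$, and bounds the projection of the thin double rectangle $M_0$ directly. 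That projection is controlled by the width $q$ and an $O(\theta_1)$ term from the length, both of which are made small by taking $r_1$ small and $N$ large, uniformly over $\theta\in[0,\theta_1]$. No block structure enters.

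Your approach to part~(ii) has a genuine gap. The claim that a reflecting ray ``typically completes its reflections within the block it entered'' is unjustified: after a reflection the ray travels in direction $e^{i(\pi-\theta)}$, which is nearly horizontal, and may cross many block boundaries before its next reflection. The ``left frontier'' heuristic does not survive multiple reflections (a ray can zig-zag deep inside a block and still eventually drift out the left side), and there is no clear mechanism connecting the $y$-projection bound of Lemma~\ref{m16.2}(ii) to the measure of laterally escaping inputs; the frontier you describe is not the same object that lemma controls.

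The paper's proof of~(ii) abandons the block decomposition entirely. It partitions $M'_0$ into open rectangles $D_k$ of width $\eps/2$ and full height $q$. Because both possible ray directions $e^{i(\theta+\pi/2)}$ and $e^{i(\pi-\theta)}$ have positive $y$-component and nonpositive $x$-component, a ray entering $D_k$ from below must exit through either the top edge $\wh I_k$ or the short left edge $\wt I_k$. If it exits through the top, entry and exit lie in $D_k$ and are within distance $\sqrt{(\eps/2)^2+q^2}<\eps$ (for $q$ small), so such a ray cannot land in $B_4$. The only remaining contributions come from rays exiting through some $\wt I_k$, and by Lemma~\ref{m20.4} the total input measure of those is at most $\sum_k 2\Leb(\wt I_k)\le 2q(1+2/\eps)$, which is made smaller than $\eps/(64\pi)$ by choice of $r_1,N$. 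Together with a small residual set $B_5$ of rays entering $M'_0$ outside $\bigcup_k \ol D_k$, this yields $b_4<\eps/(32\pi)$.
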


\begin{proof}
(i) Note that if $(\bv, \bw) \in \calB_3 $ then $\bw \in B_3
\subset K_{\pi/2 - \theta}$. We have $B_3 \subset \Pi_{\pi/2 -
\theta} M_0$. It is easy to see that one can make $r_1>0$ so
small and $N$ so large that $\Leb(\Pi_{\pi/2 - \theta} M_0)<
 \eps/(32 \pi) $ for all $\theta \in [0, \theta_1]$. It follows that
$b_3 = \Leb(B_3) \leq \Leb(\Pi_{\pi/2 - \theta} M_0)<  \eps/(32 \pi) $
for all $\theta \in [0, \theta_1]$.

(ii) Recall that $q$ denotes the width of $M_0'$. Let $z^0=
(z^0_1, z^0_2)$ be the lower right corner of $M'_0$ and  $z^k =
(z^0_1 - k \eps/2, z^0_2)$ for $k\geq 0$. Let $\calB_1^k$ be
the bundle of light rays in $\calB_1$ which enter $M'_0$
through the line segment $\ol {z^k, z^{k+1}}$. Let $D_k$ be the
(unique) open rectangle with two sides on the lines that
contain the longs sides of $M_0'$ and such that two of its
corners are $z^k$ and $z^{k+1}$. Let $k_*$ be the maximum $k$
such that $D_k \subset M'_0$ and note that $k_* \leq 2/\eps $.
Let $\wh I_k$ be the upper side of $D_k$ and let $\wt I_k$ be
the left side of $D_k$. Note that light rays in $\calB_1^k$
enter $D_k$ at the same time when they enter $M'_0$. After
reflecting in $F_0 \cap D_k$, they have to exit $D_k$ either
through $\wh I_k$ or $\wt I_k$. In the following definition,
$L_{\bv, \bw}^*$ represents the part of the set  $L_{\bv,
\bw}$ lying outside $D_k$. 
Since $D_k$ is open, we will use the statement $L_{\bv, \bw}^* \cap \wh I_k \ne \emptyset$ in the definitions below to indicate that $\Gamma_{\bv, \bw} $ exits $D_k$ through $ \wh I_k$ (the same remark applies to similar statements).  
Light rays exiting $D_k$ can be
grouped into the following four bundles,
\begin{align*}
\wh \calB^k_2 &= \{ (\bv, \bw): \bw \in \wh B^k_2, \bv = e^{i (\theta+\pi/2)}, L_{\bv, \bw}^* \cap \wh I_k \ne \emptyset \}, \\
\wt \calB^k_2 &= \{ (\bv, \bw): \bw \in \wt B_2^k, \bv = e^{i (\theta+\pi/2)}, L_{\bv, \bw}^* \cap \wt I_k \ne \emptyset \}, \\
\wh \calB^k_3 &= \{ (\bv, \bw): \bw \in \wh B_3^k, \bv = e^{i (\pi - \theta)}, L_{\bv, \bw}^* \cap \wh I_k \ne \emptyset \},\\
\wt \calB^k_3 &= \{ (\bv, \bw): \bw \in \wh B_3^k, \bv = e^{i (\pi - \theta)}, L_{\bv, \bw}^* \cap \wt I_k \ne \emptyset \}.
\end{align*}
We have
\begin{align*}
\wh B^k_2 \cup \wt B^k_2 \subset K_\theta, \qquad
\wh B^k_3 \cup \wt B^k_3 \subset K_{\pi/2 - \theta},
\end{align*}
and we let
\begin{align*}
\wh b^k_2 = \Leb(\wh B^k_2), \qquad
\wt b^k_2 = \Leb(\wt B^k_2), \qquad
\wh b^k_3 = \Leb(\wh B^k_3), \qquad
\wt b^k_3 = \Leb(\wt B^k_3).
\end{align*}
Note that $\Leb(\wt I_k)= q$ so $\Leb(\Pi_\theta \wt I_k) +
\Leb(\Pi_{\pi/2 - \theta} \wt I_k) \leq  2 q$.
Since $\wt B^k_2 \subset \Pi_\theta \wt I_k$ and $\wt B^k_3
\subset \Pi_{\pi/2 - \theta} \wt I_k$, we obtain $\Leb(\wt
B^k_2) + \Leb(\wt B^k_3)  < 2 q$.
This implies that 
\begin{align*}
\sum_{0\leq k \leq k_*}(\Leb(\wt B^k_2) + \Leb(\wt
B^k_3))  < 2 q(1+k_*) \leq 2 q (1 + 2/\eps).
\end{align*}
We can make
$r_1>0$ so small and $N$ so large that $2 q (1 + 2/\eps) <  \eps/(64 \pi)  $. Then  
\begin{align}\label{m25.5}
\sum_{0\leq k \leq k_*}(\Leb(\wt B^k_2) + \Leb(\wt
B^k_3))  <  \eps/(64 \pi)  .
\end{align}

If a light ray enters one of the
rectangles $D_k$ and it does not exit $D_k$ through $\wt I_k$
then the distance between the entry point and the exit point
from $D_k$ is less than $\eps$.

Let $\{ (\bv, \bw): \bw \in B_5, \bv = e^{i
(\theta+\pi/2)} \} \subset \calB_1$ represent all light rays that
hit $M'_0$ at a point which does not belong to $\bigcup_{0\leq
k \leq k_*} \ol D_k$. It is easy to see that we can make
$r_1>0$ so small and $N$ so large that 
\begin{align}\label{m25.6}
\Leb(B_5) <  \eps/(64 \pi)  .
\end{align}

If a light ray arrives along $\Gamma_{\bv, \bw} $ with $(\bv, \bw) \in \calB_1$ and escapes along $\Gamma_{\bv, \bw_1} $ with $\bw_1 \in B_4$ then 
\begin{enumerate}[(i)]
\item 
either $\bw \in B_5$,
\item
or $\Gamma_{\bv,\bw} $ enters a set $\ol D_k$ at the
same time when it enters $M'_0$ and 
exits $D_k$ along a line $\Gamma_{\bv_2,\bw_2} $ 
with $(\bv_2,\bw_2) \in \wt \calB^k_2 \cup \wt \calB^k_3$.
\end{enumerate} 
This, Lemma \ref{m20.4} and \eqref{m25.5}-\eqref{m25.6} imply that $\Leb(B_4) <  \eps/(32 \pi) $.
\end{proof}

\begin{proof}[Proof of Theorem \ref{m16.10}]

We will define several families of lines representing the
progress of light rays on their way through the disc $D$. The
bundles will represent only those light rays that have
``desirable'' trajectories. The bundles of light rays will have
the form $\calB_j = \{ (\bv, \bw): \bw \in B^\theta_j, \bv =
e^{i (\theta+\pi/2)}, \theta \in [0, \theta_1] \}$ with
$B_j^\theta \subset K_\theta$. Estimates for light rays with $\theta \notin [0, \theta_1]$ can be obtained by rotation invariance.

The first bundle $\calB_6$ represents all light rays that hit
the unit disc and are perpendicular to a line $K_\theta$ with
$\theta \in [0, \theta_1]$. In other words, $B^\theta_6 =
\{x\in  K_\theta: |x| < 1\}$ and $\Leb(B^\theta_6)= 2$.

Let $\calB_7$ be the bundle of those light rays in $ \calB_6$
which hit $F$.

Let $\calB_8$ be the bundle of those light rays in $ \calB_7 $
which hit $M_0$ but do not hit $F$ before hitting $M_0$. 

Let $\calB_9$ be the family of all lines $L_{\bv_1, \bw_1}$
which represent light rays at the exit time from $M_0$,
assuming that they satisfy the following three conditions. First, the light rays entered $M_0$ along a line $(\bv, \bw)\in
\calB_8$.
Second, $\bv_1 = \bv$;  in other words, the exiting light ray moves
along a line parallel to the one along which it was moving at
the hitting time of $M_0$. Thirdly,
$|\bw - \bw_1| \leq \eps$.

 Let $\calB_{10}$ represent those lines in $\calB_9$
which do not hit $F \setminus M_0 $. Heuristically, it would be more natural
to define $\calB_{10}$ as the family of those lines in
$\calB_9$ which represent light rays which do not hit $F$ \emph{after exiting $M_0$}.
However, we believe that our definition of $\calB_{10}$ makes
our argument a little bit easier to understand.

Finally, let $\calB_{11}$ be the family of all lines $L_{\bv, \bw}$ in $\calB_{8}$ representing light rays which, after reflecting from mirrors in $F_0$, exit $M_0$ along a line $L_{\bv_1, \bw_1}\in \calB_{10}$.

Note that $\mtwo(\calB_6) = \mtwo(\calV)/N = 4\pi/N $ because $\theta_1 = 2\pi/N$.   A similar argument and the fact that the set
$F$ is invariant under rotations by angles $k\theta_1$,
$k\in \Z$, apply to other
estimates, for example, \eqref{m24.1} implies
that $\mtwo(\calB_6 \setminus \calB_7) < \eps/(8N) $.

The last estimate and Lemma \ref{m18.14} imply that $\mtwo(\calB_7 \setminus \calB_8) < \eps/(8N) + (\eps/(16 \pi))2 \pi/N = 2 \eps/(8N)$. This gives $\mtwo(\calB_6 \setminus \calB_8) < 3\eps/(8N) $. 
We combine this estimate,  Lemma \ref{m20.4} and Lemma \ref{m20.5} to derive the following inequality, $\mtwo(\calB_6 \setminus \calB_9) <  3\eps/(8N) + 2 (\eps/(32\pi))2\pi/N = 4 \eps/(8N)$.
At this point, we apply Lemma \ref{m18.14} again to derive the following estimate, $\mtwo(\calB_6 \setminus \calB_{10}) <  6\eps/(8N)$.

Using Lemma \ref{m20.4} we obtain $\mtwo(\calB_6 \setminus \calB_{11}) = \mtwo(\calB_6 \setminus \calB_{10}) < 6\eps/(8N)$.

Let $\calA = \bigcup_{0\leq k \leq N-1} \{(\bv, \bw):  (e^{2\pi ik/N} \bv,e^{2\pi ik/N} \bw) \in \calB_6 \setminus \calB_{11} \}$. 
Then $\mtwo(\calV \setminus\calA) \leq \eps$, by rotation invariance of $F$. It easy to check that $\calA$ satisfies the conditions stated in parts (i) and (ii) of the theorem.
\end{proof}

\section{Invisible mirror}\label{invmir}

This section contains the proof of Theorem \ref{a3.1}.

Let $\calS_n$ denote the family of all permutations of integers $1,2,\dots,n$. 
Let $Z$ be the rhombus (a convex closed quadrilateral) with sides on the lines $K_0$, $K_0+(0,1)$, $K_\theta$ and $K_\theta + e^{i(\theta+\pi/2)}$. Let $L$ be the diagonal of $Z$ which does not contain $(0,0)$.
For $\bs =(j_1,j_2, \dots , j_n)\in \calS_n$ and a set $A \subset \R^2$, let 
\begin{align*}
\calM_{n,\bs,k} (A)&=  
(1/n)A -\frac{k-1}n  \frac{e^{0\cdot i}}{\sin \theta} + \frac{j_k-1}n  \frac{e^{i \theta}}{\sin \theta}, \qquad 1\leq k \leq n,\\
\calM_{n,\bs} (A)&=  
\bigcup_{1\leq k \leq n} \calM_{n,\bs,k} (A),\\  
Z_{n,\bs} &=\calM_{n,\bs} (Z).
\end{align*}

\begin{remark}\label{a3.6}
(i)
It is elementary to see that for any $n\geq 1$ and any permutation $\bs$,
the set $G = \calM_{n,\bs} (L)$ satisfies part (i) of Theorem \ref{a3.1} (see Fig.~\ref{fig4}).
It remains only to prove that for some $n$ and $\bs$, $G = \calM_{n,\bs} (L)$
satisfies part (ii) of the theorem.
\begin{figure}
\centering
 \includegraphics[width=10cm]{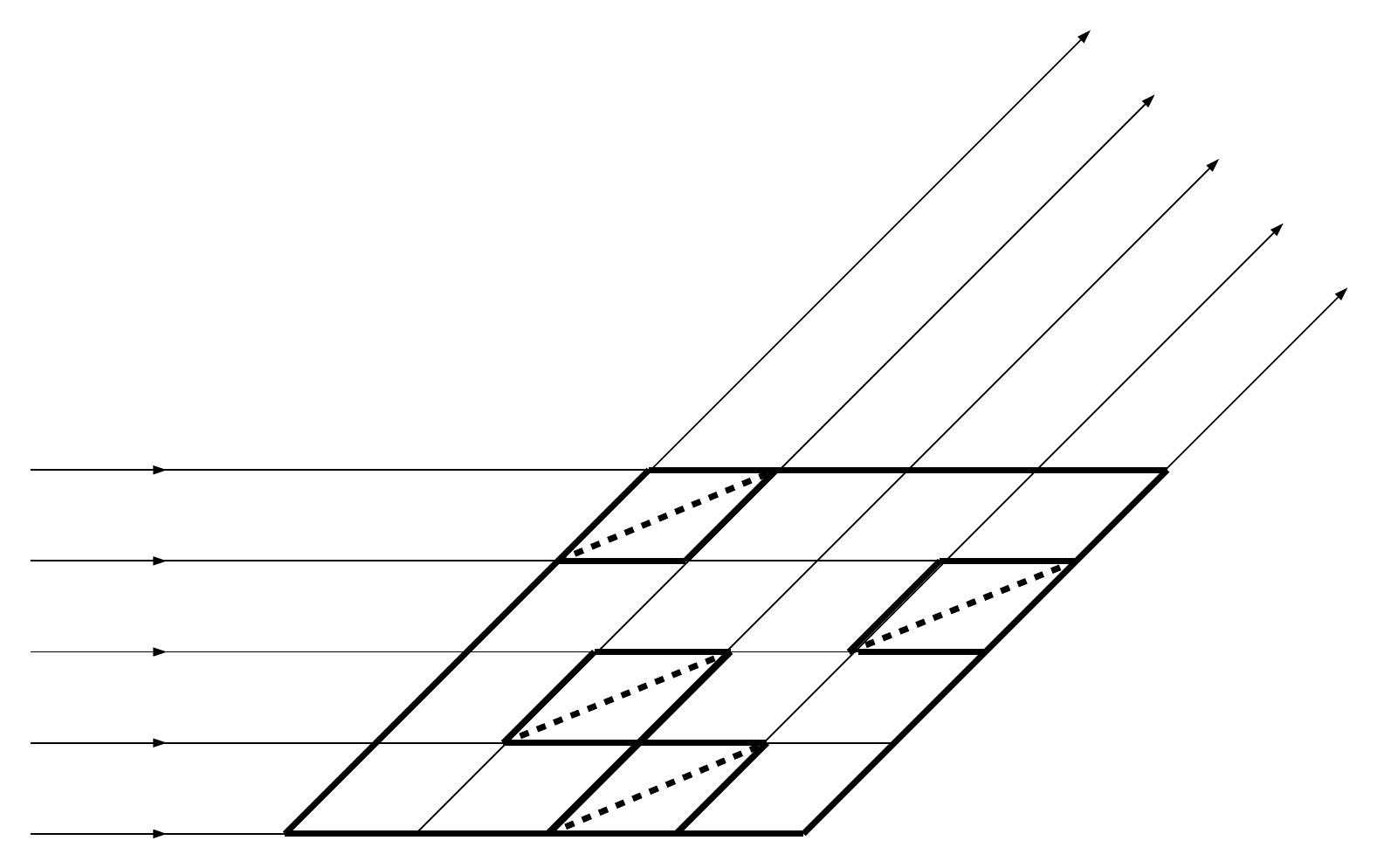}
\caption{A unit width bundle of light is coming from the left.
Exiting light rays form a
bundle of unit width inclined at an angle $\theta$ (in our picture, $\theta = \pi/4$). The rhombus $Z$ and the four
rhombuses comprising $Z_{4, \bs}$ with $\bs=  (3,1,2,4)$ are outlined
in bold. The mirrors comprising $\calM_{4,\bs} (L)$ are represented by dotted lines.
}\label{fig4}
\end{figure}

(ii) It is easy to see that $\calM_{n_1,\bs_1} (Z_{n_2,\bs_2})
= \calM_{n_1,\bs_1} (\calM_{n_2,\bs_2}(Z))$ is equal to $Z_{n_3,\bs_3}$
with $n_3 = n_1 n_2$ and some $\bs_3 \in \calS_{n_3}$. By induction,
\begin{align*}
\calM_{n_1,\bs_1} \circ\calM_{n_2,\bs_2}\circ \cdots
\circ\calM_{n_k,\bs_k}(Z)
\end{align*}
can be represented as $Z_{n_{*},\bs_{*}}$
with $n_* = n_1 n_2\cdots n_k$ and some $\bs_* \in \calS_{n_*}$.

(iii) Suppose that for some $\alpha$, $\delta$, $n$ and $\bs$ we have 
$\Leb(\Pi_\alpha Z_{n,\bs}) < \delta$. Then for any $n_1$ and $\bs_1$ we have
$\Leb(\Pi_\alpha \calM_{n_1,\bs_1}(Z_{n,\bs})) < \delta$. To see this,
note that for every $1\leq k \leq n_1$,
\begin{align*}
\Leb(\Pi_\alpha \calM_{n_1,\bs_1,k}(Z_{n,\bs}))
= (1/n_1) \Leb(\Pi_\alpha Z_{n,\bs}) < \delta/n_1,
\end{align*}and $\calM_{n_1,\bs_1}(Z_{n,\bs})$ is the union
of $n_1$ sets of the form $\calM_{n_1,\bs_1,k}(Z_{n,\bs})$.
We also trivially have 
$\Leb(\Pi_\alpha \calM_{n,\bs}(Z_{n_1,\bs_1})) \leq \Leb(\Pi_\alpha \calM_{n,\bs}(Z)) < \delta$. 
By induction, if $\Leb(\Pi_\alpha \calM_{n_j,\bs_j}(Z)) < \delta$
for some $1\leq j \leq k$ then
\begin{align}\label{a3.7}
\Leb(\Pi_\alpha 
(\calM_{n_1,\bs_1} \circ\calM_{n_2,\bs_2}\circ \cdots
\circ\calM_{n_k,\bs_k}(Z) )) < \delta.
\end{align}

\end{remark}

\bigskip

For a given $\alpha$, we will find $n $ and $\bs$ such that $\Leb(\Pi_\alpha Z_{n,\bs})$ is small.
Note that the vertices of rhombuses $\calM_{n,\bs,k}(Z)$ belong to the lattice $\calL_{n,\theta}$ generated by $ 1/(n \sin \theta) $ and $ e^{i\theta}/ (n \sin \theta) $, that is, the set of all points of the form $ k_1/(n \sin \theta) + e^{i\theta}k_2/(n \sin \theta) $, for integer $k_1$ and $k_2$.
We will first consider $\alpha$ such that $\alpha- \pi/2 \ne 0,\theta$ and $K_{\alpha-\pi/2}$
passes through a point $z= -k_1/\sin \theta + k_2 e^{i\theta}/\sin \theta \in \calL_{1,\theta}$, $z\ne (0,0)$. Such angles $\alpha$ can be considered to be ``rational'' in the context of our construction.
We will assume that $k_1$ and $k_2$ are positive. Other cases can be dealt with in a similar manner.

Consider a (large) integer $n_1>2k_1 k_2$ whose value will be specified later. 
We will call a family $\calC$ of pairs of integers a \emph{chain} if for every $(j_1,j_2), (j_3, j_4) \in \calC$ we have $1\leq j_1, j_2, j_3, j_4 \leq n_1$,  $j_1 - j_3 = m k_1$ and $ j_2 - j_4 = m k_2$ for some integer $m$. We will say that chains $\calC_1$ and $\calC_2$ are orthogonal and write $\calC_1 \perp \calC_2$ if 
for every $(j_1,j_2) \in \calC_1$ and $ (j_3, j_4) \in \calC_2$, $j_1\ne j_3$ and $j_2 \ne j_4$. 
We will say that a chain $\calC$ is \emph{maximal} if for every $(j_1,j_2) \in \calC$ and $1\leq  j_3, j_4 \leq n_1$ such that  $j_3 - j_1 = m k_1$ and $ j_4 - j_2 = m k_2$ for some integer $m \geq 1$ we have $(j_3,j_4) \in \calC$. Note that maximality is not symmetric; the maximal chain extends as far as possible in one direction but not necessarily in the opposite direction. 
We will always tacitly assume that the coordinates of pairs $(j,k)$ are in the range from 1 to $n_1$.
We will say that $(j_1,j_2)$ is the \emph{root} of $ \calC$ if for every $ (j_3, j_4) \in \calC$ we have $j_3 \geq j_1$ (note that then $j_4 \geq j_2$ because $k_1,k_2 >0$). We will also say that $\calC$ starts at $(j_1,j_2)$.

Next we will define a family of chains such that every pair of these chains is orthogonal. We will use induction.
Let $\calC_1$ be the  maximal  chain starting at $(1,1)$.

Suppose that $\calC_1, \calC_2, \dots, \calC_m$ have been chosen and suppose further that every pair of these chains are orthogonal. We will write $\bs(j_1) = j_2$ if  
$(j_1,j_2) \in \calC_r$ for any $1\leq r \leq m$. The formula defines a function $\bs$ on a subset of $\{1,2,\dots, n_1\}$ because the chains are orthogonal. For the same reason, the function $\bs$ is injective. If $\bs(j)$ is defined for all $1\leq j \leq n_1$ then we stop. 

Note that functions $\bs$ defined at different steps of the induction argument necessarily agree on the common parts of their domains.
For this reason, we did not put a subscript on $\bs$.

Suppose that $\bs(j_1)$ is not defined for some $1\leq j_1 \leq n_1$.  Let $r_1$ be the smallest of the integers $1 \le j_1 \le n_1$ such that $\bs(j_1)$ is not defined. Since $\bs$ is bijective on its domain, there must exist $1\leq j_2 \leq n_1$ which is not in the range of $\bs$.  Let $r_2$ be the smallest of the integers $1 \le j_2 \le n_1$ such that $j_2$ is not in the range of $\bs$. We will say that $(r_1,r_2)$ is the minimal root in the complement of $\calC_1 \cup \dots \cup \calC_m$.  Let $\calC_{m+1}$ be the maximal chain that starts at $(r_1,r_2)$.
We will argue that $\calC_{m+1}\perp \calC_r$ for all $1\leq r \leq m$.

Suppose that for some $1\leq r \leq m$, it is not true that $\calC_{m+1}\perp \calC_r$. Then there must exist $(j_1,j_2) \in \calC_{m+1}$ and $(j_3,j_4) \in \calC_r$ such that $j_1 = j_3$ or $j_2 = j_4$. We will assume that $j_1 = j_3$, the other case being analogous. We can assume without loss of generality that $j_1$ is the smallest of all integers with the property that $(j_1,j_2) \in \calC_{m+1}$ and $(j_1,j_4) \in \calC_r$ for some $j_2$ and $j_4$.
We will argue that $(j_1,j_4)$ is the root of $\calC_r$. Note that 
$(j_1,j_2) $ is not the root of $ \calC_{m+1}$ because the first coordinate of the root cannot agree with the first coordinate of any element of $\calC_1 \cup \dots \cup \calC_m$. Hence, $(j_1 - k_1 , j_2 - k_2)  \in \calC_{m+1}$. Suppose that $(j_1,j_4)$ is not the root of $\calC_r$. Then $(j_1 - k_1 , j_4 - k_2)  \in \calC_r$. The first coordinates of  $(j_1 - k_1 , j_2 - k_2)$ and  $(j_1 - k_1 , j_4 - k_2)$ agree. This contradicts the assumption that $j_1$ is the smallest of all integers with the property that $(j_1,j_2) \in \calC_{m+1}$ and $(j_1,j_4) \in \calC_r$ for some $j_2$ and $j_4$. Hence, 
$(j_1,j_4)$ is the root of $\calC_r$.

Recall that $(r_1,r_2)$ is the root of $\calC_{m+1}$ and $j_1 > r_1$. This implies that $(j_1,j_4)$ is not the minimal root in the complement of $\calC_1 \cup \dots \cup \calC_{r-1}$.  This contradiction completes the proof that $\calC_{m+1}\perp \calC_r$ for all $1\leq r \leq m$.

As we said, the inductive procedure stops when $\bs$ is a permutation of $\{1,2,\dots, n_1\}$. Let $\calC_1, \calC_2, \dots, \calC_{m_1}$ be the family of all chains constructed in the induction process when it is stopped. 

Let $\ell$ be the length of the longer of the two diagonals of $Z$ ($L$ is not necessarily the longest diagonal). Note that all projections of $Z$ on any line have length $\ell$ or less. 
Hence, $\Leb(\Pi_\alpha \calM_{n_1, \bs, k} (Z)) \leq \ell/n_1$, for all $k$.

Fix an arbitrarily small $\eps>0$ and choose $n_2$ so large that $\ell/n_2 < \eps /8$.

Let $\calD_1$ be the set of all $(j,k)$ such that $1\leq j,k \leq n_1(1-\eps/(16\ell))$ and let
$\calD_2$ be the set of all $(j,k)$ such that $1\leq j,k \leq n_1$ and $(j,k) \notin \calD_1$.

We relabel $\calC_1, \calC_2, \dots, \calC_{m_1}$ in such a way that all chains $\calC_1, \calC_2, \dots, \calC_{m_2}$ have their roots in $\calD_1$ and the chains $\calC_{m_2+1}, \dots, \calC_{m_1}$
have roots in $\calD_2$ (one of these families may be empty, in principle).
Recall that chains $\calC_r$ are maximal. This implies that 
$n_1$ can be made so large that every chain $\calC_1, \calC_2, \dots, \calC_{m_1}$ has at least $n_2$ elements.
Since $\bs$ is a bijection, the number of elements in 
$\calC_{m_2+1}\cup \dots\cup \calC_{m_1}$
is less than $\eps n_1 /(8\ell)$.

Let $N_1$ be the set of all $j_1$ such that $(j_1,j_2)\in \calC_k$, for some $1\leq  j_2 \leq n_1$ and $1\leq k \leq m_2$, and let $N_2 = \{1,2,\dots, n_1\}\setminus N_1$. 

Recall that $\Leb(\Pi_\alpha \calM_{n_1, \bs, k} (Z)) \leq \ell/n_1$, for all $k$. We have, 
\begin{align}\label{a3.2}
\Leb\left(\Pi_\alpha \left(\bigcup_{k\in N_2}\calM_{n_1, \bs, k} (Z)\right)\right) \leq (\eps n_1 /(8\ell)) (\ell/n_1) = \eps/8.
\end{align}

The key observation is that for every chain $\calC_k$ and all $(j_1,j_2), (j_3, j_4 ) \in \calC_k$, the projections $\Pi_\alpha \calM_{n_1, \bs, j_1} (Z)$
and $\Pi_\alpha \calM_{n_1, \bs, j_3} (Z)$
are identical. More generally,  if $(j_1,j_2) \in \calC_k$ then $\Pi_\alpha (\bigcup_j \calM_{n_1, \bs, j} (Z) ) = \Pi_\alpha  \calM_{n_1, \bs, j_1} (Z) $, where the union is over $j$ such that for some $r$, $(j,r) \in \calC_k$. 
Since every chain $\calC_k$ with $k\leq m_2$ has length at least $n_2$, we see that 
\begin{align*}
\Leb\left(\Pi_\alpha \left(\bigcup_{k\in N_1}\calM_{n_1, \bs, k} (Z)\right)\right)
&\leq (1/n_2) \sum_{k\in N_1}
\Leb\left(\Pi_\alpha \calM_{n_1, \bs, k} (Z)\right)\\
&\leq (1/n_2)  N_1 (\ell/n_1) \leq (1/n_2) \ell < \eps/8.
\end{align*}
Combining this with \eqref{a3.2}, we obtain
\begin{align}\label{a3.3}
\Leb\left(\Pi_\alpha \calM_{n_1, \bs} (Z)\right) < \eps/4.
\end{align}

Next, we consider the case of ``irrational'' $\alpha$ such that 
$\alpha-\pi/2\ne 0,\theta$ and $K_{\alpha-\pi/2}$
does not pass through a point $z\in \calL_{n,\theta}$, $z\ne (0,0)$.
Fix any integer $n\geq 1$.
Note that the line $K_{\alpha-\pi/2}$ passes arbitrarily close to some points
$z\in \calL_{n,\theta}$, $z\ne (0,0)$.
Hence, for any $\eps_1>0$, there exists $z\in \calL_{n,\theta}$
such that $\Leb\left(\Pi_\alpha (Z/n \cup (Z/n + z))\right) < 
\Leb\left(\Pi_\alpha (Z/n )\right)+\eps_1$.
In view of this, it is easy to see that for any integer $n_3$ 
there exists $z\in \calL_{n,\theta}$
such that\begin{align*}
\Leb\left(\Pi_\alpha \left(Z/n \cup \bigcup_{1\leq j \leq n_3} (Z/n + jz )\right)\right) < 
2 \Leb\left(\Pi_\alpha (Z/n )\right).
\end{align*}
We can now repeat the argument given in the case of ``rational'' $\alpha$, with $z$ as in the last formula
and $ n_1 \ge n_3 $. We conclude that 
for any $\alpha$ with $\alpha-\pi/2\ne 0,\theta$ there exist $n_1$ and $\bs$ such that \eqref{a3.3} holds, except for $\eps/4$ replaced by $\eps/2$, that is,
\begin{align}\label{a12.1}
\Leb\left(\Pi_\alpha \calM_{n_1, \bs} (Z)\right) < \eps/2.
\end{align}

Since $\calM_{n_1, \bs} (Z)$ is a finite union of rhombuses, the function 
$\alpha \to \Leb\left(\Pi_\alpha \calM_{n_1, \bs} (Z)\right) $ is continuous. This and \eqref{a12.1} imply that for every $\alpha$ with $\alpha-\pi/2\in (0,\theta) \cup (\theta, \pi)$ there exist $n_1$, $s$ and $\Delta \alpha >0$ such that $\pi/2,\theta+\pi/2 \notin (\alpha - \Delta \alpha, \alpha + \Delta \alpha)$ and for $\beta \in (\alpha - \Delta \alpha, \alpha + \Delta \alpha)$, 
\begin{align}\label{a3.4}
\Leb\left(\Pi_\beta \calM_{n_1, \bs} (Z)\right) < \eps.
\end{align}

The set of $\alpha$ satisfying $\alpha -\pi/2 \in [\eps, \theta-\eps] \cup [\theta+\eps, \pi -\eps]$ 
is compact so it is covered by a finite number of intervals of the form 
$(\alpha - \Delta \alpha, \alpha + \Delta \alpha)$. Let $\alpha_1, \alpha_2, \dots, \alpha_m$ be the set of centers of these intervals and let $(j_n, \bs_n)$ be the integer and permutation such that \eqref{a3.4} holds with $n_1 = j_n$, $\bs = \bs_n$ and 
$\beta \in (\alpha_n - \Delta \alpha_n, \alpha_n + \Delta \alpha_n)$.
By \eqref{a3.7} and \eqref{a3.4}, for $\alpha$ satisfying $\alpha -\pi/2 \in [\eps, \theta-\eps] \cup [\theta+\eps, \pi -\eps]$,
\begin{align}\label{a3.8}
\Leb(\Pi_\alpha 
(\calM_{j_1,\bs_1} \circ\calM_{j_2,\bs_2}\circ \cdots
\circ\calM_{j_m,\bs_m}(Z) )) < \eps.
\end{align}
According to Remark \ref{a3.6} (ii), there exist $j_*$ and $\bs_*$ such that
\begin{align}\label{a3.9}
\calM_{j_1,\bs_1} \circ\calM_{j_2,\bs_2}\circ \cdots
\circ\calM_{j_m,\bs_m}(Z) = \calM_{j_*,\bs_*}(Z).
\end{align}
We now take $G = \calM_{j_*,\bs_*}(L)$. Since $L\subset Z$, we obtain from \eqref{a3.8} and \eqref{a3.9},
 for $\alpha$ satisfying $\alpha -\pi/2 \in [\eps, \theta-\eps] \cup [\theta+\eps, \pi -\eps]$,
\begin{align*}
\Leb(\Pi_\alpha G) = \Leb(\Pi_\alpha \calM_{j_*,\bs_*}(L))
\leq \Leb(\Pi_\alpha \calM_{j_*,\bs_*}(Z)) < \eps.
\end{align*}
This completes the proof of Theorem \ref{a3.1}.

\bibliographystyle{alpha}
\bibliography{invisible}

\end{document}